\documentclass[12pt]{article}
\usepackage{amsmath,amsfonts,amssymb,amsthm,amscd,stmaryrd,mathabx,tikz-cd}
\usepackage[utf8]{inputenc}
\title{Keisler Measures and Generically Stable Random Types}
\date{\today}
\author{Karim Khanaki\thanks{Partially supported by IPM grant 1405030024 }\\Arak University of Technology, and 
\\Institute for Research in Fundamental Sciences (IPM)
}

\newtheorem{Theorem}{Theorem}[section]
\newtheorem{Proposition}[Theorem]{Proposition}
\newtheorem{Definition}[Theorem]{Definition}
\newtheorem{Remark}[Theorem]{Remark}

\newtheorem{Corollary}[Theorem]{Corollary}
\newtheorem{Fact}[Theorem]{Fact}
\newtheorem{Example}[Theorem]{Example}
\newtheorem{Question}[Theorem]{Question}

\newtheorem{Observation}[Theorem]{Observation}

\begin{document}
\maketitle

\begin{abstract}  
{\normalsize 
We introduce the notions of $rgs$ and $irgs$ for Keisler measures, motivated by the study of generically stable random types and their associated Morley sequences. We obtain characterizations of these notions in terms of averages of classical first-order formulas over suitable probabilistic partitions   (Theorems~\ref{main-rgs} and~\ref{main-irgs}).
 We compare these notions with $fim$, $fam$, and self-averaging, 
and show that for types the notions $fim$, $irgs$, and $rgs$   coincide.  
 We prove that every $irgs$ measure is dependent (Theorem~\ref{irgs-dependent}); consequently, such measures are symmetric (Corollary~\ref{cor}). Furthermore, we show that for $irgs$ measures the model-theoretic instability events $\mathbf{O}^\varphi$, $\mathbf{I}^\varphi$, and $\mathbf{L}^\varphi$ have $\mathbb{P}_\mu$-measure zero (Theorem~\ref{irgs-tame-event}), extending results from \cite{GH} beyond the $fim$ case. }
\end{abstract}

\section{Introduction}
Generic stability has a long historical background, dating back to the early developments of stability theory by classical model theorists, notably Bruno Poizat. Its roots can even be traced further back to ideas in algebraic geometry, particularly in the work of André Weil.

Most of the initial systematic investigations of this notion were first carried out in stable theories and were later extended to NIP theories.
 Pillay and Tanović \cite{PT} introduced the notion of generically stable types in arbitrary theories, providing an appropriate generalization of definable and finitely satisfiable types in the stable setting.

On the other hand, 
the study of Keisler measures in model theory has seen significant progress in recent years, driven by connections to probability, analysis, and learning theory.
For Keisler measures in NIP theories, the  notion of generic stability has been appropriately generalized and has been extensively studied. However, despite this historical development, a satisfactory notion serving as an appropriate generalization of generic stability for measures in arbitrary theories has not yet been identified, or at least there is no consensus on such a notion.

The difficulty is twofold. First, unlike the NIP setting, various different notions such as definability, finite satisfiability, $fam$, and $fim$ (see, e.g., \cite{CGH-wild}) are no longer equivalent in general. Second, there is no analogue of Morley sequences for measures that plays the same canonical role as in the case of types.

A natural idea for addressing the second issue is to study random types, viewed as analogues of Keisler measures, in the randomization of the theory,
 as developed in several works (see, e.g., \cite{B-transfer, K-generic,CGH, G-transfer}). Since it is a well-established belief that many model-theoretic properties are preserved under randomization, it is natural to define a measure as generically stable if its randomization is a generically stable type.

With this background in mind, the goal of the present paper is to introduce notions satisfying the desired properties.
 In this sense, our work continues and is intended to be a completion of the ideas developed in previous works (see, e.g., \cite{K-Morley, K-generic, K-dependent,CGH}).

In particular, in \cite{CGH} the notion of self-averaging was proposed as a candidate for generic stability of measures. This notion captures several of the expected desirable properties of generic stability. However, it still does not fully meet the anticipated requirements, and we believe that it fails to adequately reflect the fundamental distinction between types and measures.

We argue that the notion of dependence (see \cite{K-dependent}), which has been less emphasized in the literature, more accurately captures this distinction. The goal of this paper is therefore to propose suitable notions of generic stability for measures, in such a way that, on the one hand, they are equivalent to generic stability of types in the randomization, and on the other hand, they satisfy the dependence property.

In this paper, we study two new classes of Keisler measures,
 which we call \emph{randomly generically stable} (\(rgs\)) and \emph{independently randomly generically stable} (\(irgs\)). These notions
  are designed to capture the behaviour of measures whose associated random types are generically stable.
  We characterize these notions by means of averages of classical first-order formulas over suitable probabilistic partitions.



The characterizations  involve a fixed atomless probability space and a limiting process over sequences of atomic measures, which may appear technical at first sight. Nevertheless, as we argue, they encode natural uniform convergence conditions that are exactly what is needed to pass from the classical to the continuous setting.

We then compare these new concepts with already existing ones: \emph{fim} (frequency interpretation measure), \emph{fam} (finitely approximated measure), and \emph{self‑averaging  measure}. It turns out that the implications
\[
\begin{tikzcd}[column sep=small]
\mathrm{fim} \arrow[r, Rightarrow] \arrow[dr, Rightarrow]
& \mathrm{irgs} \arrow[r, Rightarrow] \arrow[d, Rightarrow]
& \mathrm{rgs} \arrow[r, Rightarrow, shorten >=-3pt, shorten <=-3pt]
& \mathrm{fam} \\
& \mathrm{dependence}
\end{tikzcd}
\]
hold, and for types all three first notions coincide (Theorem~\ref{irgs-dependent} and Corollary~\ref{comparing}). Moreover, we prove that every \(irgs\) measure is \emph{dependent} in the sense of the author's previous work \cite{K-dependent}, thereby extending the known fact that every $fim$ is dependent.



 In particular, Section~5 investigates the probabilistic behaviour of $irgs$ measures with respect to model‑theoretic instability events. We show that for $irgs$ measures the events $\mathbf{O}^\varphi$, $\mathbf{I}^\varphi$, $\mathbf{L}^\varphi$ (order, independence, strict order) have $\mathbb{P}_\mu$-measure zero, thereby extending the results of \cite{GH} from $fim$ to $irgs$.

The paper is organised as follows. 
Section~2 recalls the necessary background on generically stable types in continuous logic, simple models of randomization, and the canonical random type $r_\mu$; it also introduces the auxiliary notions of stable basic sequences and independent stable basic sequences. 
Section~3 defines the new concepts $rgs$ and $irgs$ for Keisler measures, gives their characterisations via stable basic sequences (Theorems~\ref{main-rgs} and \ref{main-irgs}), and compares them with $fim$, $fam$ and self‑averaging (Corollary~\ref{comparing}). 
Section~4 proves that every $irgs$ measure is dependent (Theorem~\ref{irgs-dependent}) and derives consequences such as symmetry of the Morley product (Corollary~\ref{cor}). 
Section~5 extends the analysis of tame events from \cite{GH} to $irgs$ measures, showing that $\mathbb{P}_\mu(\mathbf{O}^\varphi)=\mathbb{P}_\mu(\mathbf{I}^\varphi)=\mathbb{P}_\mu(\mathbf{L}^\varphi)=0$ (Theorem~\ref{irgs-tame-event}). 
We end with some concluding remarks.

\section{Preliminaries}
In this section, we review the basic material needed for the proofs of the main theorems.
We first review the notion of generically stable types in continuous logic (and hence in discrete logic), together with the equivalent characterizations that will be used throughout the paper. We then introduce simple models of randomization and random types.
And at the end of this section, we introduce notions of sequences with properties that capture what we think a Morley sequence for a measure ought to have.

We let $T$ be a classical first-order theory in a countable language $L$ and let $M$ be a model of $T$.  
To simplify notation, we assume throughout that all partitioned formulas are of the form $\varphi(x;y)$, where $|x|=1$ and $y$ is a finite tuple of variables. 
Since we also work with continuous theories, it is convenient to interpret $L$-formulas as $\{0,1\}$-valued functions: for $a,b\in M$, we set $\varphi(a,b)=1$ if $M\models\varphi(a,b)$,   and $\varphi(a,b)=0$ otherwise.

We denote the monster model of the classical theory by $\mathcal{U}$, and, to avoid confusion, we always denote the monster model of the continuous theory by $\mathcal{C}$. In this paper, the continuous theory under consideration is essentially $T^R$, the randomization of the theory $T$.

Whenever no confusion arises, we do not distinguish notationally between discrete and continuous formulas. Nevertheless, formulas in the randomization theory are typically denoted by $\llbracket \varphi(\ldots) \rrbracket$, where $\varphi$ is a classical formula. These formulas will be defined later.

We denote elements of simple models of randomization by $f,g,\ldots$. Such elements are expressions of the form $\sum_i a_i^{A_i}$, which will be defined precisely in due course. Elements of a classical model $M$, such as $a$, are identified with the corresponding constant random variables and denoted by $a^\Omega$. Whenever no confusion arises, we simply write $a$ instead of $a^\Omega$.

\subsection{Generically stable types in continuous logic}
The definition of generically stable types in continuous logic was first given in \cite{K-generic}, although it is essentially a straightforward generalization of such types in discrete logic. These types were also introduced and studied in \cite[Def. 3.1]{CGH}. In the definition below, if we replace \(r<s\) simply by \(0,1\), we essentially obtain the definition of this concept in discrete logic. We recall again that in this paper the monster model of continuous theories will be denoted by \(\mathcal{C}\) and that of discrete theories by \(\mathcal{U}\), in order to avoid confusion.

\begin{Definition}
Suppose $p \in S_x(\mathcal{C})$ is invariant over $M \preceq \mathcal{C}$.
 We say that $p$ is
\emph{generically stable (over $M$)} 
if there do not exist a formula $\varphi(x;y)$, a Morley sequence $(a_i)_{i<\omega}$ in $p$ over $M$, a sequence $(b_i)_{i<\omega}$ in $\mathcal{C}^y$, and real numbers $r<s$,
such that
\(
\varphi(a_i;b_j)\le r  \text{ if } i\le j,
\)
and
\(
\varphi(a_i;b_j)\ge s  \text{ if } i>j.
\)
\end{Definition}
The concepts of definable types, finitely satisfiable types, \emph{fam} and \emph{fim} can be defined analogously to discrete logic in classical logic. For example, for the definition of \emph{fim} in continuous logic, refer to \cite{K-generic, CGH}.
Analogous to the discrete case, $fim$ and generic stability are equivalent, and this, together with other equivalent conditions, is stated in the following fact.
\begin{Fact}[\cite{K-generic,CGH}] \label{generic-fact-1}
Let $M \preceq \mathcal{C}$ and suppose $p \in S_x(\mathcal{C})$
is $M$-invariant. Then the following are equivalent.

\begin{enumerate}
    \item[(i)] $p$ is generically stable over $M$.

    \item[(ii)] For any Morley sequence $(a_i)_{i<\omega}$ in $p$ over $M$,
    any $L$-formula $\varphi(x;y)$, and any $b \in \mathcal{C}^y$,
    \[
    \lim_{i\to\infty} \varphi(a_i;b)
    =\varphi^p(x;b).
    \]

    \item[(iii)] For any $L$-formula $\varphi(x;y)$ and any $\varepsilon>0$,
    there exists $n_{\varphi,\varepsilon}<\omega$ such that for any
    Morley sequence $(a_i)_{i<\omega}$ in $p$ over $M$ and any
    $b\in \mathcal{C}^y$,
    \[
    \left|
    \left\{
    i<\omega :
    \left|
    \varphi(a_i;b)-\varphi^p(x;b)
    \right|
    \ge \varepsilon
    \right\}
    \right|
    \le n_{\varphi,\varepsilon}. \ \ \ (*)
    \]

    \item[(iv)] $p$ is  $fim$ over $M$.
\end{enumerate}
\end{Fact}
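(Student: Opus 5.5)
We prove the cycle (i)$\Rightarrow$(iii)$\Rightarrow$(ii)$\Rightarrow$(i), and then the equivalence (iii)$\Leftrightarrow$(iv). Throughout we use that the standard facts on invariant types transfer to continuous logic. Fix an $M$-invariant $p$, a formula $\varphi(x;y)$, and write $\varphi^p(b)$ for the value $\varphi(x;b)^p$. A Morley sequence $(a_i)$ in $p$ over $M$ is indiscernible over $M$. Moreover, for any $b$, any $i$ with $a_i\models p|_{Mb a_{<i}}$ satisfies $\varphi(a_i;b)=\varphi^p(b)$. We will use this last fact to splice $b$ into the sequence.

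\textbf{(i)$\Rightarrow$(iii).} We argue by contradiction. Suppose that for some $\varepsilon$ there is no bound $n_{\varphi,\varepsilon}$. Then for every $n$ there are a Morley sequence and a parameter $b$ such that at least $n$ indices satisfy $|\varphi(a_i;b)-\varphi^p(b)|\ge\varepsilon$. By pigeonhole we may assume that all these deviations go in one direction, say $\varphi(a_i;b)\ge\varphi^p(b)+\varepsilon$, and by an $\varepsilon/4$-grid argument that $\varphi^p(b)$ lies in a fixed small interval $[t,t+\varepsilon/4]$. Compactness together with the indiscernibility of Morley sequences then yields a Morley sequence $(a_i)_{i<\omega}$ and a parameter $b$ with infinitely many indices satisfying $\varphi(a_i;b)\ge t+\varepsilon$, while $\varphi^p(b)\le t+\varepsilon/4$.

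Next we build the order configuration, which is the main obstacle. Choose a realization of $p$ over everything so far: this gives $\varphi$-value at most $t+\varepsilon/4$ at $b$. Using Ramsey and compactness, we extract an array $(a_i,b_j)$ such that $(a_i)$ is Morley and $\varphi(a_i;b_j)\ge s$ for $i\le j$, together with new elements realizing $p$ that give values $\le r$ for $i>j$. The standard way is to construct $b_j$ inductively. Let $\bar a$ be a long Morley sequence. By indiscernibility and compactness, for each finite $j$ we find $b_j$ with high value on an initial segment of $\bar a$ and low value on the tail. This is possible because realizations of $p$ over $Mb_j\cdot(\text{earlier})$ may be appended to the sequence while remaining Morley, and invariance gives them low value. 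Reindexing (reversing the roles of $i\le j$ and $i>j$ if needed, or replacing $\varphi$ by $1-\varphi$) produces the forbidden pattern with some $r<s$, contradicting (i).

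\textbf{(iii)$\Rightarrow$(ii)} is immediate: for each $\varepsilon$, only finitely many terms deviate by $\varepsilon$ or more.

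\textbf{(ii)$\Rightarrow$(i).} Suppose $(a_i)$ and $(b_j)$ witness the order property with $r<s$. By compactness, extend this to a configuration indexed by $\omega+\omega$, still with $(a_i)$ Morley, so that some $b=b_\omega$ has $\varphi(a_i;b)\ge s$ on the first $\omega$ indices and $\le r$ on the tail. Then along a Morley sequence of order type $\omega$ formed by interleaving first-block and tail elements (indiscernibility ensures this is again Morley in $p$ over $M$), the limit of $\varphi(a_i;b)$ does not exist. This contradicts (ii).

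\textbf{(iii)$\Leftrightarrow$(iv).} Recall that $fim$ requires formulas $\theta_n(x_1,\dots,x_n)$ over $M$ such that $\theta_n$ holds of $p^{(n)}|_M$ and every tuple satisfying $\theta_n$ has $\varphi$-averages within $\varepsilon_n\to0$ of $\varphi^p(b)$, uniformly in $b$. For (iii)$\Rightarrow$(iv), the uniform bound gives that Morley $n$-sequences have averages within $\varepsilon+n_{\varphi,\varepsilon}/n$ of $\varphi^p(b)$ for all $b$. This is a closed condition true in $p^{(n)}|_M$, so by compactness it is implied by a single condition $\theta_n\in p^{(n)}|_M$, up to a small slack. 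For (iv)$\Rightarrow$(i), apply $fim$ averages to an order configuration: segments of the Morley sequence of length $n$ have averages near $s$ against early $b_j$ and near $r$ against later $b_j$, while both must be near $\varphi^p(b_j)$. By indiscernibility of $(b_j)$ over the tail, $\varphi^p(b_j)$ is constant, which gives a contradiction for large $n$.
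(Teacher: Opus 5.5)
The paper gives no proof of this Fact; it is quoted from the cited sources, so your argument has to stand on its own. Your plan of (i)$\Rightarrow$(iii)$\Rightarrow$(ii)$\Rightarrow$(i) plus (iii)$\Rightarrow$(iv)$\Rightarrow$(i) is sensible. However, three steps silently use properties that a merely $M$-invariant $p$ is not yet known to have: definability, and total indiscernibility of its Morley sequences. Both are \emph{consequences} of generic stability.

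\textbf{Definability.} The map $\mathrm{tp}(b/M)\mapsto\varphi^p(b)$ need not be continuous. So conditions such as ``$\varphi^p(b)\le t+\varepsilon/4$'' or ``$\sup_b|\frac1n\sum_{i<n}\varphi(a_i;b)-\varphi^p(b)|\le\delta$'' are not type-definable, and compactness does not apply to them. This causes two problems.

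In (i)$\Rightarrow$(iii) it breaks your claim that compactness yields $b$ with $\varphi^p(b)\le t+\varepsilon/4$. Your later step, ``a realization of $p$ over everything has low value at $b$'', depends on that claim. The repair is to append the realizations of $p$ \emph{before} passing to the limit:
given $\bar a$ and $b$, add $\bar a'\models p^{(\omega)}|_{Mb\bar a}$. The concatenation is still Morley over $M$, which is a type-definable condition, and $\varphi(a'_k;b)=\varphi^p(b)$. After moving the $n$ deviating indices to the front by indiscernibility, the requirement becomes the closed conditions $\varphi(a_i;b)\ge\varphi(a'_0;b)+\varepsilon$ and $\varphi(a'_k;b)=\varphi(a'_0;b)$. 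Compactness then gives an $(\omega+\omega)$-indexed Morley sequence cut by a single $b$. Take $b_j$ to be the image of $b$ under an automorphism over $M$ that fixes $a_0,\dots,a_j$ and sends $a_{\omega+k}$ to $a_{j+1+k}$; replace $\varphi$ by $1-\varphi$ if the orientation is wrong.

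In (iii)$\Rightarrow$(iv), ``this is a closed condition'' presupposes that $\varphi^p$ is a definable predicate over $M$. You must first extract this from (iii). Along one fixed Morley sequence, $|\frac1N\sum_{i<N}\varphi(a_i;b)-\varphi^p(b)|\le\varepsilon+n_{\varphi,\varepsilon}/N$ for \emph{all} $b$. Hence $\varphi^p$ is a uniform limit of definable predicates, so it is definable, and it is definable over $M$ by invariance. This is exactly the paper's remark that uniform convergence yields definability.

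\textbf{Total indiscernibility.} In (ii)$\Rightarrow$(i) the interleaving step is false. Morley sequences of a non-symmetric invariant type are only order-indiscernible. So only \emph{increasing} subsequences of $(a_i)_{i<\omega+\omega}$ realize $p^{(\omega)}|_M$, and an increasing subsequence of order type $\omega$ changes blocks at most once; think of the type at $+\infty$ in DLO.

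You do not need interleaving. The first block and the tail are each Morley sequences of order type $\omega$. Applying (ii) to both forces $\varphi^p(b)$ to be simultaneously $\le r$ and $\ge s$, a contradiction.

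Similarly, in (iv)$\Rightarrow$(i) your claim that $\varphi^p(b_j)$ is constant is unjustified, since the $b_j$ are not indiscernible, but it is also unnecessary. Fix a single $b_j$ with $j\ge n-1$. The tuples $(a_0,\dots,a_{n-1})$ and $(a_{j+1},\dots,a_{j+n})$ both realize $p^{(n)}|_M$. The fim condition then gives $s-\varepsilon_n\le\varphi^p(b_j)\le r+\varepsilon_n$, which is impossible once $2\varepsilon_n<s-r$.
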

In this paper, we call the convergence given in $(*)$ in part (iii) of the above fact \emph{uniform convergence}.
\begin{Remark}
We observe that \emph{uniform convergence}, as introduced above, is stronger than the Baire-$1/2$ convergence introduced in \cite{K-generic}. The latter guarantees only the convergence of Morley sequences, whereas uniform convergence further implies the definability of $p$.\footnote{We emphasize that the notion of uniform convergence employed here is entirely different from the classical notion of uniform convergence in analysis; no confusion should arise between these two concepts. The terminology is adopted solely by analogy and should not be interpreted as identifying them.}
\end{Remark}

In what follows, further equivalent conditions for generic stability are presented, the most important of which---playing a key role in this paper---is condition (ii). This condition essentially provides a sequence inside a small model that encodes all the information of Morley sequences and gives us the control we need. In fact, instead of speaking of Morley sequences, we speak of {\em the} Morley sequence inside the model.

\begin{Fact} \label{key-fact} \cite[Thm.~2.13, Cor.~2.14]{K-generic}
Let $T$ be a continuous theory, $M$ a small model of $T$, and let
$p \in S_x(\mathcal{C})$ be a global $M$-invariant type.
The following are equivalent:
\begin{enumerate}
    \item[(i)] $p$ is generically stable over $M$.

    \item[(ii)] There exists a sequence $(c_i)_{i<\omega}$ in $M$ which converges \emph{uniformly} to $p$ in the following sense: for any formula $\varphi(x;y)$ and any $\varepsilon>0$, there exists $n_{\varphi,\varepsilon}<\omega$ such that for every $b\in \mathcal{C}^y$,
    \[
    \left|
    \left\{
    i<\omega :
    \left|
    \varphi(c_i;b)-\varphi^p(x;b)
    \right|
    \ge \varepsilon
    \right\}
    \right|
    \le n_{\varphi,\varepsilon}.
    \]

    \item[(iii)] $p$ is definable over and finitely satisfiable in some small model, and there exists a convergent Morley sequence in $p$ over $M$.
    
\end{enumerate}
\end{Fact}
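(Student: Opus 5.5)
We prove the cycle (i)$\Rightarrow$(ii)$\Rightarrow$(iii)$\Rightarrow$(i), using Fact~\ref{generic-fact-1} freely.

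For (i)$\Rightarrow$(ii), take a Morley sequence $(a_i)_{i<\omega}$ in $p$ over $M$. By Fact~\ref{generic-fact-1}(iii), it converges to $p$ uniformly, with bounds $n_{\varphi,\varepsilon}$ that do not depend on the sequence. We must move this sequence into $M$. Since $p$ is generically stable, it is definable over $M$. It is also finitely satisfiable in $M$, and here we may first replace $M$ by a small model $M'\supseteq M$ that is $\aleph_1$-saturated and strongly homogeneous over $M$; this is harmless because generic stability over $M$ implies generic stability over any larger model. Now realize the type $\mathrm{tp}((a_i)_{i<\omega}/M)$ inside $M'$ by a sequence $(c_i)_{i<\omega}$, using the saturation of $M'$. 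Fix $\varphi,\varepsilon$ and suppose some $b\in\mathcal{C}^y$ has more than $n=n_{\varphi,\varepsilon}$ bad indices for $(c_i)$. Then some finite set of indices $i_0<\dots<i_n$ satisfies $|\varphi(c_{i_k};b)-\varphi^p(x;b)|\ge\varepsilon$ for all $k$. By definability, $\varphi^p(x;b)=\psi(b)$ for an $M$-definable continuous predicate $\psi$. So the condition ``there exists $y$ with $|\varphi(x_{i_k};y)-\psi(y)|\ge\varepsilon$ for all $k\le n$'' is a formula over $M$ in the variables $x_{i_0},\dots,x_{i_n}$. It is true of $(c_{i_k})$, hence also of $(a_{i_k})$, because the two sequences have the same type over $M$. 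This contradicts the uniform bound for the Morley sequence. Continuous logic forces a slight adjustment: closed conditions ``$\ge\varepsilon$'' are handled by working with $\varepsilon/2$. So $(c_i)$ lies in a small model and converges uniformly.

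For (ii)$\Rightarrow$(iii), first take limits over $i$. Since the number of bad indices is bounded for each $b$, the pointwise limit $\lim_i\varphi(c_i;b)$ exists and equals $\varphi^p(x;b)$. Finite satisfiability of $p$ in $M$ is immediate: if $\varphi^p(x;b)>r$ then $\varphi(c_i;b)>r-\varepsilon$ for cofinitely many $i$. For definability, the uniform bound $n_{\varphi,\varepsilon}$ says that $\varphi^p(x;b)$ is within $\varepsilon$ of a ``majority vote'' of $\varphi(c_i;b)$ over any $2n+1$ indices $i$. For instance, take the median of $\varphi(c_0;b),\dots,\varphi(c_{2n};b)$, which is a continuous combination of formulas with parameters in $M$. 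As $\varepsilon\to0$ these approximations converge uniformly in $b$, so $\varphi^p$ is an $M$-definable predicate. It remains to produce a convergent Morley sequence over $M$. Since $p$ is definable and finitely satisfiable, Morley sequences exist. Take any Morley sequence $(a_i)$. Each finite subsequence $a_{i_0},\dots,a_{i_k}$ can be approximated from $M$ by iterated finite satisfiability, so by the uniform bound applied at each coordinate no $b$ can witness more than about $n_{\varphi,\varepsilon}$ oscillations. This gives convergence of $(a_i)$.

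For (iii)$\Rightarrow$(i), we use the standard argument of Pillay--Tanović and Hrushovski--Pillay, adapted to continuous logic. A type that is definable over and finitely satisfiable in a small model $N$ has the property that Morley sequences over $N$ are totally indiscernible, provided there is no order pattern. A convergent Morley sequence then rules out the pattern $\varphi(a_i;b_j)\le r$ for $i\le j$ and $\varphi(a_i;b_j)\ge s$ for $i>j$. If such a pattern existed, Ramsey and compactness would let us interleave the $b_j$'s to produce a single $b$ with $\varphi(a_i;b)$ alternating between values $\le r$ and $\ge s$ infinitely often, contradicting convergence. One must also pass from $N$ back to $M$; this uses $M$-invariance together with indiscernibility of Morley sequences over $M$ versus over $N$.

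I expect the main obstacle to be (ii)$\Rightarrow$(iii), specifically the Morley-sequence convergence. This is where the uniform bound on a sequence in a small model has to transfer to genuine Morley sequences. Condition (ii) is stated over arbitrary parameters from $\mathcal{C}$, so I would rely on it being uniform in $b$ to pass the bound through the finite-satisfiability approximations.
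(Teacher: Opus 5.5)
\textbf{The gap is in (i)$\Rightarrow$(ii).} The paper only quotes this Fact without proof, so I am measuring your argument against the statement itself. Clause (ii) asks for a uniformly convergent sequence \emph{in the given model $M$}, but your sequence lies in the enlarged model $M'$.

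The enlargement is not harmless. Generic stability does pass to $M'$, but what you then prove is clause (ii) for $M'$, not for $M$. Membership in the base model is also exactly what Theorem~\ref{main-rgs} needs in order to obtain $M$-atomic measures.

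Once enlargement is allowed, the saturation detour is superfluous. The Morley sequence $(a_i)$ itself lives in a small model containing $M$ and already has the uniform bound of Fact~\ref{generic-fact-1}(iii). Nor can the detour be pushed down to $M$: $\operatorname{tp}((a_i)/M)$ contains $p|_M$, which is not realized in $M$ when $p$ is not realized. (Separately, $\aleph_1$-saturation does not let you realize a type over an uncountable $M$.)

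\textbf{The missing idea} is a sequence in $M$ that only \emph{eventually} looks like a Morley sequence. This is the EEM-type construction alluded to in the remark after the Fact.
\begin{enumerate}
\item Since $L$ is countable, $p$ is definable over a countable $A\subseteq M$.
\item Since $p$ is finitely satisfiable in $M$, you can choose $c_n\in M$ by a diagonal (coheir) construction, each $c_n$ approximately satisfying finitely many conditions of $p$ over $Ac_{<n}$.
\item Definability of $p$ over $A$ then shows that the eventual EM-type of $(c_i)$ over $A$ is $p^{(\omega)}|_A$.
\item Take $N=n_{\varphi,\varepsilon/2}$ from Fact~\ref{generic-fact-1}(iii) and the $A$-definable predicate $\theta(x_0,\dots,x_N)=\sup_y\min_{k\le N}|\varphi(x_k;y)-d_p\varphi(y)|$, where $d_p\varphi(b)=\varphi^p(x;b)$. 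Morley sequences satisfy $\theta\le\varepsilon/2$.
\item Hence there is $m$ with $\theta(c_{i_0},\dots,c_{i_N})<\varepsilon$ for all $m\le i_0<\dots<i_N$. So every $b\in\mathcal C^y$ has at most $m+N$ indices with $|\varphi(c_i;b)-\varphi^p(x;b)|\ge\varepsilon$. Since $m$ does not depend on $b$, this is the required uniform bound.
\end{enumerate}

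\textbf{Two smaller repairs elsewhere.}
\begin{enumerate}
\item In (ii)$\Rightarrow$(iii), the Morley tuple must be approximated by the $c_j$ themselves. Clause (ii) says nothing about arbitrary points of $M$ supplied by finite satisfiability. Replace $a_N,a_{N-1},\dots,a_0$ one at a time by $c_j$'s with large distinct indices, using $\operatorname{tp}(c_j/\mathcal C)\to p$ and $a_k\models p|_{Ma_{<k}}$; the already substituted $c_j$ lie in $M$. Then apply (ii) once to the resulting tuple of $c_j$'s.
\item In (iii)$\Rightarrow$(i), total indiscernibility of Morley sequences of a type definable over and finitely satisfiable in $N$ holds unconditionally, because such a type commutes with itself. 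Making it depend on ``no order pattern'' would be circular.
\end{enumerate}
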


\begin{Remark}
In part (ii) of the above fact, when working in classical logic, the eventual Ehrenfeucht--Mostowski type (EEM-type) of $(c_i)$ over $M$ is, as shown in \cite{Kyle-seq}, the Morley type $p^{(\omega)}|_{M}$. In \cite{K-generic}, the notion of the EEM-type for symmetric formulas (the SEEM-type) was introduced in continuous logic.
We note that since the type $p$ is definable and finitely satisfiable, its Morley sequence is totally indiscernible; hence, in this setting we use the EEM-type rather than the SEEM-type.
\end{Remark}

\subsection{Randomization}
In this subsection, we review some notions concerning the randomization
of a theory and its simple randomization models that will be used
throughout this paper.  For a more detailed exposition of randomization in continuous logic,
 we refer the reader to the original
reference \cite{BK}.

Let \(T\) be a complete first-order theory and \(M \models T\).
Fix an atomless probability space \((\Omega,\mathcal B,\mathcal{P})\).
The simple randomization model associated to \(M\), denoted by
\(M_0^\Omega\), consists of all measurable functions
\(f:\Omega \to M\) having finite range.
Every element \(f \in M_0^\Omega\) may therefore be written in the form
\(
f=\sum_{i=1}^n a_i^{A_i},
\)
where \(a_1,\dots,a_n \in M\) and
\((A_1,\dots,A_n)\) is a measurable partition of \(\Omega\).

For every classical formula \(\varphi(x_1,\dots,x_n)\) and every
\(f_1,\dots,f_n \in M_0^\Omega\), one defines the event

\[
\llbracket \varphi(f_1,\dots,f_n)\rrbracket
=
\{\omega \in \Omega :
M \models \varphi(f_1(\omega),\dots,f_n(\omega))\}.
\]

The probability of this event gives rise to a definable predicate

\[
\mathbb P\llbracket \varphi(f_1,\dots,f_n)\rrbracket
=
\mathcal{P}(
\llbracket \varphi(f_1,\dots,f_n)\rrbracket
).
\]

Hence every classical formula induces a real-valued predicate on the
randomization structure.
The metric on \(M_0^\Omega\) is defined by 
\(
d(f,g)
=
\mathcal{P}(
\llbracket f \neq g \rrbracket
).
\)
The completion of \(M_0^\Omega\) with respect to this metric is denoted
by \(M^\Omega\)  and is called the randomization of \(M\).

An important point is that the interpretation of formulas extends
continuously from simple random variables to the completion.
Consequently, every formula of the original language determines a
uniformly continuous predicate on \(M^\Omega\).

If \(a \in M\), we identify \(a\) with the constant random variable
\(a^\Omega:\omega \mapsto a\).
Thus \(M\) naturally embeds into \(M^\Omega\). Whenever no confusion may arise, we write simply $a$ instead of $a^{\Omega}$ in randomization formulas.

It is known that the theory of all simple randomization structures
associated with models of \(T\) is complete. We denote this theory by
\(T^R\). The randomization construction and the theory \(T^R\) were
introduced by \cite{BK}.
Moreover, the theory \(T^R\) admits quantifier elimination (see Theorem 2.9 there). In particular,
every formula is equivalent to a definable predicate of the form
\(
\mathbb P\llbracket \varphi(\bar x)\rrbracket,
\)
where \(\varphi(\bar x)$ is a classical $L$-formula.
We denote the monster model of $T^R$ by $\mathcal C$.
Moreover, if $\mathcal U$ is a monster model of $T$ and
$M \models T$, then
\(
M^\Omega \preceq \mathcal U^\Omega \preceq \mathcal C
\).
Notice that, in general (except for very tame theories $T$),
the structure $\mathcal U^\Omega$ is not saturated; see, for example,
Example~3.10(ii) and Theorem~3.12 in \cite{BK}.

For further emphasis and clarity, we restate the quantifier elimination of $T^R$ in the form of a fact.
\begin{Fact}[\cite{BK}, Thm. 2.9]
$T^R$ admits quantifier elimination. This means that the type of any $f \in \mathcal{C}$ is uniquely determined by the values $\mathbb{P}\llbracket \varphi(f) \rrbracket$ for classical formulas $\varphi$.
\end{Fact}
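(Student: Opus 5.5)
The statement to be established is the quantifier elimination of $T^R$: the type of $f\in\mathcal C$ is determined by the values $\mathbb P\llbracket\varphi(f)\rrbracket$. I would follow the strategy of \cite{BK} and use the standard back-and-forth test for quantifier elimination in continuous logic. It suffices to show the following. Let $\mathcal M,\mathcal N\models T^R$ with $\mathcal N$ $\aleph_1$-saturated. Let $\bar f\in\mathcal M$ and $\bar g\in\mathcal N$ satisfy $\mathbb P\llbracket\varphi(\bar f)\rrbracket=\mathbb P\llbracket\varphi(\bar g)\rrbracket$ for every classical $\varphi$. Then for every $h\in\mathcal M$ there is $k\in\mathcal N$ with $\mathbb P\llbracket\psi(\bar f,h)\rrbracket=\mathbb P\llbracket\psi(\bar g,k)\rrbracket$ for all classical $\psi$. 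The proof then proceeds by induction on formula complexity.

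The key tool is the Boolean algebra of events. The axioms of $T^R$ make the event sort an atomless probability algebra. Fullness, i.e.\ existence of witnesses $\llbracket\exists x\,\varphi\rrbracket=\llbracket\varphi(h)\rrbracket$, together with gluing of random elements along events, are first-order. First I would reduce to finitely many formulas. By compactness and saturation of $\mathcal N$, it is enough to realize, for each finite set $\psi_1,\dots,\psi_m$ and each $\varepsilon$, a $k$ matching these probabilities within $\varepsilon$.

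For such a finite set, the atoms of the Boolean algebra generated by the $\psi_j(\bar f,h)$ give a partition of $\Omega$ into events. Each such atom $\chi(\bar f,h)$ is contained in $\llbracket\exists x\,\chi(\bar f,x)\rrbracket$. I would then work in $\mathcal N$ with the events $B_\chi=\llbracket\exists x\,\chi(\bar g,x)\rrbracket$. For every Boolean combination, the measures of these events agree with the corresponding measures for $\bar f$, because $\exists x\,\chi$ is classical. Atomlessness then lets me pick disjoint events $E_\chi\subseteq B_\chi$ with $\mathbb P(E_\chi)=\mathbb P\llbracket\chi(\bar f,h)\rrbracket$. Choosing these sets consistently with the overlaps of the $B_\chi$ is a Hall-type / measure-matching problem. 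It is solvable because the same configuration is realized by $\bar f,h$ on the $\mathcal M$ side.

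Next, fullness gives witnesses $k_\chi$ with $\chi(\bar g,k_\chi)$ holding a.e.\ on $B_\chi$. Gluing these witnesses along the $E_\chi$ produces the desired $k$.

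The main obstacle is this matching step: splitting the $B_\chi$ consistently. I would handle it by refining to the atoms of the finite algebra generated by all $\llbracket\exists x\,\chi(\bar g,x)\rrbracket$, together with the classical atoms in $\bar g$. On each atom, since the measures agree, the atomless algebra allows me to copy the proportions from $\mathcal M$ exactly.
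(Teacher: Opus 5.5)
The paper gives no proof of this Fact; it simply quotes [BK, Thm.~2.9]. Your sketch is a correct, self-contained proof via the standard back-and-forth criterion for quantifier elimination in continuous logic. The step you flag as the main obstacle is handled correctly by your refinement to atoms. Put
$\theta_\sigma(\bar y)=\bigwedge_{\chi\in\sigma}\exists x\,\chi(\bar y,x)\wedge\bigwedge_{\chi\notin\sigma}\neg\exists x\,\chi(\bar y,x)$
and $D_\sigma=\llbracket\theta_\sigma(\bar f)\rrbracket$. On $D_\sigma$, the event $\llbracket\chi(\bar f,h)\rrbracket\sqcap D_\sigma$ is null unless $\chi\in\sigma$. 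Since $\mathbb P\llbracket\theta_\sigma(\bar g)\rrbracket=\mathbb P(D_\sigma)$, you can split $\llbracket\theta_\sigma(\bar g)\rrbracket$ into pieces of the same sizes, each lying inside the corresponding $B_\chi$. So no Hall-type argument is actually needed. After gluing the witnesses, $\llbracket\chi(\bar g,k)\rrbracket\doteq E_\chi$, because both families partition the top event.

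Two points should be made explicit.

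First, the paper uses the two-sorted formalism of [BK], in which formulas may also quantify over the event sort. Your criterion must therefore allow event parameters and extension by an event. This is handled by the same atomless splitting on atoms, or via the Event Axiom ($U\doteq\llbracket h_1=h_2\rrbracket$). In either case, add the event parameters as generators of your finite algebras.

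Second, identifying the quantifier-free type of $\bar f$ with the values $\mathbb P\llbracket\varphi(\bar f)\rrbracket$ relies on the Boolean and Distance Axioms.

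Compared with the bare citation, your route gives quantifier elimination in the usual continuous sense: every formula is a uniform limit of quantifier-free ones. That is exactly what the stated consequence about types requires. It also makes visible that only fullness/perfect witnesses, atomlessness and gluing are involved---the same tools the paper later uses in the proof of Theorem~\ref{irgs-dependent}.
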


In what follows, we recall a method for the canonical extension of a Keisler measure in $T$ to a random type in $T^R$, which plays a key role in this paper and was  introduced by Ben Yaacov in \cite{B-transfer}.
This canonical extension has also been studied in \cite{CGH} and \cite{G-transfer}.

Consider a global measure $\mu$ that is definable over the model $M$ of $T$.
We define the random type $r_\mu$, which we call the canonical extension of $\mu$ (Ben Yaacov called it the natural extension), as follows:

The definition is carried out in two stages: first for parameters in $M_0^\Omega$, and then, using the definability of $\mu$, for all parameters in $\mathcal{C}$.
Consider the formula $\mathbb{P}\llbracket \varphi(x;y) \rrbracket$ and the element $g=\sum_{i=1}^k a_i^{A_i}$ of $M_0^\Omega$. We define
\[
\mathbb{P}^{r_\mu}\llbracket \varphi(x;g) \rrbracket = \sum_{i=1}^k \mathbb{P}(A_i) \cdot \mu(\varphi(x;a_i)).
\]
Now for an arbitrary parameter $g \in \mathcal{C}$, we take a net of parameters $(g_j)$ in $M_0^\Omega$ such that $\operatorname{tp}(g_j / M_0^\Omega) \to \operatorname{tp}(g / M_0^\Omega)$. We define
\[
\mathbb{P}^{r_\mu}\llbracket \varphi(x;g) \rrbracket = \lim_j \mathbb{P}^{r_\mu}\llbracket \varphi(x;g_j) \rrbracket.
\]

The following fact appears in \cite{B-transfer}, part of which was originally presented in Ben Yaacov's note essentially without proof, while the part concerning the Morley product is new in \cite{CGH}.
\begin{Fact} \label{r_mu-fact}
Let $\mu$ be a global measure definable over $M$.
\begin{enumerate}
\item[(i)] \cite{B-transfer,CGH} $r_\mu$ is well-defined and definable over $M_0^\Omega$.
\item[(ii)] \cite[Corollary~3.16]{CGH} For every $n$, $(r_\mu)^{(n)} = r_{\mu^{(n)}}$.
\item[(iii)] \cite[Corollary~3.19]{CGH} If $\mu$ is $fim$, then $r_\mu$ is also $fim$.
\item[(iv)] \cite[Proposition~3.20]{CGH} If, moreover, $\mu$ is a type, then $\mu$ is $fim$ if and only if $r_\mu$ is $fim$.
\end{enumerate}
\end{Fact}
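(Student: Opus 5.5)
The statement to address is Fact~\ref{r_mu-fact}, with its four parts. These are cited results, so what follows is a reconstruction of how I would prove each part, building up from the definition of $r_\mu$ on simple parameters.

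\textbf{Part (i).} The plan is to treat $r_\mu$ through its definitions $\mathbb{P}^{r_\mu}\llbracket\varphi(x;g)\rrbracket$ and to show that these come from a definable predicate over $M_0^\Omega$. Since $\mu$ is definable over $M$, the map $F^\varphi_\mu\colon b\mapsto\mu(\varphi(x;b))$ factors through $S_y(M)$ and is continuous there. By Stone--Weierstrass, for each $\varepsilon>0$ I would approximate it uniformly within $\varepsilon$ by a finite combination $\sum_k c_k\,\mathbf{1}_{\psi_k(y)}$ of clopen indicators, with $\psi_k$ formulas over $M$.

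For simple $g=\sum_i a_i^{A_i}$ this gives
\[
\sum_i \mathbb{P}(A_i)\,\mu(\varphi(x;a_i)) \approx_\varepsilon \sum_k c_k\,\mathbb{P}\llbracket\psi_k(g)\rrbracket .
\]
So on $M_0^\Omega$ the proposed value is a uniform limit of $T^R$-formulas with parameters in $M$, that is, a definable predicate $d\varphi(y)$. I would then define $\mathbb{P}^{r_\mu}\llbracket\varphi(x;g)\rrbracket := d\varphi(g)$ for all $g\in\mathcal{C}$. This makes the net-limit definition independent of the chosen net, because definable predicates are continuous in the type. It also gives definability over $M_0^\Omega$, in fact over $M$.

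It remains to check that these values form a consistent type. My approach is to note that for each finite set of formulas and simple parameters, the values are realized by a random element $\sum_j c_j^{B_j}$ in some $N_0^\Omega$, where $N\succ M$ is small. This works by sampling, on each $A_i$, a conditional distribution approximating $\mu$ on a finite Boolean algebra, which is possible because $\Omega$ is atomless. By QE and compactness, the values then extend to parameters in $\mathcal{C}$ through the approximating predicates.

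\textbf{Part (ii).} I would argue by induction on $n$. The target identity is
\[
\mathbb{P}^{r_\mu\otimes r_\nu}\llbracket\varphi(x,y;g)\rrbracket=\mathbb{P}^{r_{\mu\otimes\nu}}\llbracket\varphi(x,y;g)\rrbracket .
\]
First I would prove this for simple $g$: fibre over the partition and use Fubini together with the definition of the Morley product $\int F^{\varphi}_{\mu}\,d\nu$. The inner step is to compute the $r_\nu$-type of the random element realizing $r_\mu$'s defining scheme. Both sides are definable predicates in $g$, and they agree on $M_0^\Omega$. I expect this to be the main obstacle, because $M_0^\Omega$ is not saturated, so agreement there does not automatically give agreement everywhere. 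The fix I would use is that both sides are definable over $M$, and $M_0^\Omega\preceq\mathcal{C}$. Definable predicates over $M$ that agree on an elementary submodel containing $M$ agree on $\mathcal{C}$, since they are equivalent modulo $T^R$ with parameters from $M$.

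\textbf{Parts (iii) and (iv).} For (iii), suppose $\mu$ is $fim$, witnessed by formulas $\theta_n(x_1,\dots,x_n)$. I would show that $\llbracket\theta_n\rrbracket$ yields a $fim$ witness for $r_\mu$. Using (ii), $r_\mu^{(n)}=r_{\mu^{(n)}}$ gives the event $\mathbb{P}\llbracket\theta_n\rrbracket$ probability near $1$. Pointwise in $\omega$, the frequency approximation holds inside $\theta_n$, and integrating over $\Omega$ transfers it to the averages $\mathbb{P}\llbracket\varphi(x_i;g)\rrbracket$.

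For (iv), the converse direction when $\mu=p$ is a type, I would argue through the Morley-sequence characterization in Fact~\ref{generic-fact-1}. The constant random variables $a_i^\Omega$ taken along a Morley sequence of $p$ form a Morley sequence in $r_p$, because $r_p$ restricted to constants is $p$. An order configuration $\varphi(a_i,b_j)$ in $T$ then gives $\mathbb{P}\llbracket\varphi(a_i^\Omega;b_j^\Omega)\rrbracket\in\{0,1\}$, which contradicts generic stability of $r_p$. Hence $p$ is generically stable, and so $p$ is $fim$.
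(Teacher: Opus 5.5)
The paper gives no proof of this Fact; all four items are imported from \cite{B-transfer} and \cite{CGH}, so your sketch has to be judged on its own. Your organizing idea is sound, and it is the same device the paper uses later in the proofs of Theorem~\ref{main-irgs} and Fact~\ref{p_mu=r_mu}:
read $\mathbb{P}^{r_\mu}\llbracket\varphi(x;g)\rrbracket$ as the definable predicate $g\mapsto\int F^\varphi_\mu\,d\nu_g$ over the constants, where $\nu_g$ is the Keisler measure $\psi\mapsto\mathbb{P}\llbracket\psi(g)\rrbracket$ over $M$; verify each identity by a finite computation on $M_0^\Omega$; then transfer using density in $M^\Omega$ and $M^\Omega\preceq\mathcal{C}$.

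Parts (i) and (ii) go through this way, with two points made explicit. In (i), the consistency step is the heir argument. For finitely many $\varphi_s$, consider the sentence $\sup_{\bar y}\inf_x\max_s|\mathbb{P}\llbracket\varphi_s(x;y_s)\rrbracket-d\varphi_s(y_s)|$, which has parameters in $M$. The inner $\inf_x$ vanishes for parameters in $M_0^\Omega$ by your sampling construction; the atoms are $L_M$-formulas, so you can even realize them in $M$. Hence the sentence is $0$ in $M^\Omega$, and therefore in $\mathcal{C}$. In (ii), the product value is $r_\nu$ applied to the non-atomic predicate $y\mapsto d\varphi(y,g)$. What makes this computable is your linear approximation $\sum_k c_k\mathbb{P}\llbracket\psi_k(y,z)\rrbracket$ from (i). The induction also needs $\mu^{(n)}$ to be definable over $M$.

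The step that fails as stated is in (iii): $\mathbb{P}\llbracket\theta_n\rrbracket\approx 1$ is not a $fim$ witness for $r_\mu$. The reason is that $fim$ in $T^R$ concerns all formulas, and averages do not commute with nonlinear connectives. Here is a concrete case. Take $\mu=\mathrm{Av}(c,d)$ with $c\neq d$ in $M$. Pick $\bar e\in\{c,d\}^n$ with $\models\theta_n(\bar e)$, and let $f_i=e_i^\Omega$, so $\mathbb{P}\llbracket\theta_n(\bar f)\rrbracket=1$. For $\chi(x;y)=\mathbb{P}\llbracket x=y\rrbracket^2$, the average of $\chi(f_i;c^\Omega)$ is the frequency of $c$ in $\bar e$, which is close to $1/2$. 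But $\chi^{r_\mu}(c^\Omega)=1/4$.

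What your integration does give is the estimate for atomic formulas. Since elements of $\mathcal{C}$ are not random variables on $\Omega$, first encode it as the single condition over $M$
\[
\Bigl|\frac{1}{n}\sum_{i=1}^{n}\mathbb{P}\llbracket\varphi(x_i;y)\rrbracket-d\varphi(y)\Bigr|\le\varepsilon_n+1-\mathbb{P}\llbracket\theta_n(\bar x)\rrbracket ,
\]
prove it on $M_0^\Omega$, and transfer it as in (ii). Then route it through Fact~\ref{generic-fact-1}. Applied to two consecutive blocks of a Morley sequence of $r_\mu$, the estimate rules out order patterns for atomic formulas. By QE and indiscernibility, an order pattern for any formula yields one for some atomic formula (with parameters in $M^\Omega$). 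So $r_\mu$ is generically stable, hence $fim$.

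In (iv), your justification needs one correction, after which the argument is a clean direct route. The constant sequence $(a_i^\Omega)$ is Morley over $M^\Omega$ not merely because $r_p$ restricted to constants is $p$. Example~\ref{ex-rgs-not-irgs} shows that agreement over constants does not determine the type over $M^\Omega$. The actual reason is the computation, for simple $g$ over $Ma_{<n}$,
$\mathbb{P}\llbracket\varphi(a_n^\Omega;g)\rrbracket=\sum_l\mathcal{P}(C_l)\varphi(a_n;c_l)=\sum_l\mathcal{P}(C_l)\,p(\varphi(x;c_l))$, followed by density.
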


\subsection{Morley Sequences for Measures}
In this section, we introduce a notion which, based on the previous observations, may serve as a candidate for a Morley sequence for measures. 
In the next section, we investigate the connection of this notion with generically stable random types.


\begin{Definition} \label{atomic sequence}
 {\em Let $\bar r = ((r_{i,j})_{j=1}^{\infty}:i<\omega)$ be an array of real numbers in $(0,1]$ such that for all $i$, $\sum_{j=1}^{\infty} r_{i,j} = 1$.
A sequence $(\mu_i)_{i=1}^{\infty}$ of global measures  is called a sequence of $M$-atomic measures with  weights $\bar r$ if for every formula $\varphi(x;y)$ and every $b \in \cal U$,
\[
\mu_i(\varphi(x;b)) = \sum_{j=1}^{\infty} r_{i,j} \cdot \varphi(a_{i,j}; b),
\]
where $(\bar a_i=(a_{i,j})_{j<\omega}: i<\omega)$ are elements of $M$, and for each $i$ only finitely many of the $a_{i,j}$ are distinct.\footnote{This condition that for each $i$ only finitely many $a_{i,j}$ are distinct is not essential and can be removed; nevertheless, we have included it so that each $\mu_i$ is close to an average measure, which is a well-known concept in model theory.}
In this case we say that $\mu_i$ is built upon $\bar r$ and $\bar a_i$, and write  $\mu_i = \sum_{j} r_{i,j} \cdot a_{i,j}$ (for all  $i$).}
\end{Definition}

For a sequence $(\mu_i(x))_{i=1}^\infty$ of global measures, we say that $(\mu_i)_{i=1}^\infty$ converges pointwise to a global measure $\mu$ and write $\lim_{i\to\infty} \mu_i = \mu$ if for every formula $\varphi(x)$ with parameters in $\mathcal{U}$ we have $\lim_{i\to\infty} \mu_i(\varphi(x)) = \mu(\varphi(x))$.

In this section, $(\Omega,\mathcal{B},\mathcal{P})$ is an arbitrary fixed atomless probability space.\footnote{Although we could present the definition in a way that makes no reference to a probability space, namely only with coefficients of real numbers satisfying certain equations, there is a risk that, firstly, these coefficients may not be consistent, and otherwise our definition would become stronger than being equivalent to the corresponding random type being generically stable.} By a (finite or countable) partition of $\Omega$, we mean a partition into measurable subsets (events) of positive measure. We shall omit the term “measurable” throughout whenever no ambiguity arises.

The following definition presents a concept that seems a suitable candidate for a Morley sequence of a Keisler measure; we will later show that it corresponds to the Morley sequence of its extended random type.
\begin{Definition}[Stable basic sequence] \label{sbs}
{\em Let $\mu(x)$ be a global Keisler measure and $M$ a small model of $T$. We say that $\mu$ has a \emph{stable basic  sequence over $M$}  if there exists 
a sequence $(\mu_i(x))_{i=1}^{\infty}$ of $M$-atomic measures with  weights $\bar r$ (of the form given in Definition~\ref{atomic sequence} 
 , i.e., $\mu_i = \sum_{j} r_{i,j} \cdot a_{i,j}$) such that 
for all $i$,  $(r_{i,j})_{j=1}^\infty$ corresponds to a partition $(A_{i,j})_{j=1}^\infty$ of $\Omega$ into  measurable sets such that ${\cal P}(A_{i,j})=r_{i,j}$ and
  conditions (i) and  (ii) below hold:
\begin{itemize}
\item[(i)]\textbf{Extension.} \(\lim_{i \to \infty} \mu_i = \mu\), and
\item[(ii)] \textbf{Stability.}
 for every formula $\varphi(x;y)$ and every $\varepsilon > 0$, there exists a natural number $n_{\varphi,\varepsilon}$ such that for every $k \in \mathbb{N}$,
  every measurable partition $\bar k=\big(B_1,\ldots,B_k\big)$ of $\Omega$
  and every $\bar b=(b_1,\dots,b_k) \in M^k$, there exists a real number $m_\varepsilon := m_{\varphi,\varepsilon,\bar k,\bar b}$ such that
\[
\Bigl| \bigl\{ i \;:\; \bigl| \sum_{s=1}^{k} \sum_{j=1}^{\infty} r_{i,j}^s \cdot \varphi(a_{i,j}; b_s) - m_\varepsilon \bigr| > \varepsilon \bigr\} \Bigr| \;<\; n_{\varphi,\varepsilon},
\]  
where $r_{i,j}^s={\cal P}(A_{i,j}\cap B_s)$.
\end{itemize} }
\end{Definition}
\begin{Remark}
In Theorem~\ref{main-rgs}, we will show that every measure which admits a stable basic sequence has an extension to a generically stable random type $p_\mu$. 
In fact, the information of Morley sequences of this random type is encoded by the stable basic sequence introduced above. Therefore, it is not unreasonable to refer to a stable basic sequence as a Morley sequence of $p_\mu$ (or, equivalently, a Morley sequence associated to $\mu$).
\end{Remark}
The following simple and immediate observation holds, although a stronger statement will be proved in the next section.
\begin{Fact}
Assume that $\mu$ admits   a stable basic sequence over $M$. Then:
\begin{enumerate}
    \item[(i)] $\mu$ is definable over $M$.
    \item[(ii)] $\mu$ is finitely satisfiable in $M$.
\end{enumerate}
\end{Fact}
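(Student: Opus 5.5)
The plan is to specialize the stability condition (ii) of Definition~\ref{sbs} to the trivial partition $k=1$, $B_1=\Omega$. Then $r^1_{i,j}=r_{i,j}$, so the quantity controlled is exactly $\mu_i(\varphi(x;b))$. This holds for every $b\in M^y$, and also for $b\in\mathcal U^y$ if we allow parameters there; I return to this point below. Combined with condition (i), it yields both definability and finite satisfiability.

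\emph{Definability.} Fix $\varphi(x;y)$ and $\varepsilon>0$. For each $b$, at most $n_{\varphi,\varepsilon}$ indices $i$ satisfy $|\mu_i(\varphi(x;b))-m_\varepsilon|>\varepsilon$. Since $\mu_i(\varphi(x;b))\to\mu(\varphi(x;b))$, we get $|\mu(\varphi(x;b))-m_\varepsilon|\le\varepsilon$. Hence
\[
\bigl|\{i : |\mu_i(\varphi(x;b))-\mu(\varphi(x;b))|>2\varepsilon\}\bigr|<n_{\varphi,\varepsilon}
\]
for all $b$, with $n_{\varphi,\varepsilon}$ independent of $b$. This is the uniform convergence of Fact~\ref{key-fact}(ii).

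From this uniform convergence I will derive definability by a standard averaging trick. Take $N\ge n_{\varphi,\varepsilon}/\varepsilon$ and consider the average $F_N(y)=\frac1N\sum_{i<N}\mu_i(\varphi(x;y))$. Each $\mu_i(\varphi(x;y))$ is a countable convex combination of the $M$-definable functions $\varphi(a_{i,j};y)$. Since only finitely many $a_{i,j}$ are distinct, it is in fact a finite one, hence an $M$-definable continuous function on $S_y(M)$. The bad indices contribute at most $n_{\varphi,\varepsilon}/N\le\varepsilon$, so $|F_N(b)-\mu(\varphi(x;b))|\le 3\varepsilon$ for all $b$.

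Thus $\mu(\varphi(x;b))$ is a uniform limit of $M$-definable continuous functions of $\operatorname{tp}(b/M)$. It therefore depends only on $\operatorname{tp}(b/M)$ and is continuous on $S_y(M)$, i.e., $\mu$ is $M$-definable.

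\emph{Finite satisfiability.} Suppose $\mu(\psi(x))>0$ for some formula $\psi(x)$ with parameters from $\mathcal U$. By (i), $\mu_i(\psi)>0$ for some $i$. Since $\mu_i=\sum_j r_{i,j}\,a_{i,j}$ with all $r_{i,j}>0$, some $a_{i,j}\in M$ satisfies $\psi$. So $\mu$ is finitely satisfiable in $M$. This part uses only the extension condition.

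\emph{Main obstacle.} The delicate step is that the stability condition in Definition~\ref{sbs} quantifies over $\bar b\in M^k$, while definability requires control for all $b\in\mathcal U^y$. I would bridge this by compactness: the failure of the bound for some $b\in\mathcal U$ is witnessed by a finite set $I$ of $n_{\varphi,\varepsilon}$ indices. It is expressed by the satisfiability of a formula in $y$ with parameters $a_{i,j}\in M$, asserting that the values $\mu_i(\varphi(x;y))$ for $i\in I$ cannot all lie in a common interval of radius $\varepsilon$. These values are finite $M$-definable combinations, and the relevant closed conditions on them are first-order. Since $M\preceq\mathcal U$, such a formula is satisfied in $M$ as well, contradicting the hypothesis up to replacing $\varepsilon$ by $2\varepsilon$.

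With this transfer in hand, the uniform estimate holds for all $b\in\mathcal U^y$, and the argument above goes through.
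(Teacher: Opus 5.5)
Your overall strategy is sound, and the one flaw sits in the transfer step you flag yourself. As worded, your witness does not work. Take a set $I$ of $n=n_{\varphi,\varepsilon}$ indices whose values cannot all lie in one interval of radius $\varepsilon$. This only says that every $m$ is $\varepsilon$-far from \emph{one} value in $I$. That is compatible with the stability condition at $b'$, which merely asks for some $m$ with fewer than $n$ far values. The $n$ indices that are far from the limit $L=\mu(\varphi(x;b))$ do not suffice on their own either: their values may cluster, and the reference point $L$ is lost when you pass to $b'\in M$.

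What you need is a finite $I$ such that every $m\in\mathbb R$ is $\varepsilon$-far from at least $n$ of the values $v_i(b)=\mu_i(\varphi(x;b))$, $i\in I$. There are two ways to get it. If the stability condition fails at $b$, such an $I$ exists by compactness of $[-\varepsilon,1+\varepsilon]$ in the variable $m$. Alternatively, for the bound you actually use, take $n$ indices with $|v_i(b)-L|>3\varepsilon$ and $n$ further indices with $|v_i(b)-L|<\varepsilon$, which exist since $v_i(b)\to L$. If $|m-L|\le 2\varepsilon$, the first group is $\varepsilon$-far from $m$; otherwise the second group is. Each $\mu_i$ has finite support, so $(v_i(y))_{i\in I}$ is determined by the $\varphi$-type of $y$ over the finite set $\{a_{i,j}: i\in I\}\subseteq M$. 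Hence $M\preceq\mathcal U$ gives $b'\in M$ with exactly the same values, and the condition fails at $b'$. No approximation of closed conditions is needed, and using $3\varepsilon$ instead of $2\varepsilon$ does not affect your averaging.

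Once this is repaired, your route differs from the paper's at the main step. The paper does the same transfer, using a net of points of $M$ whose types converge to $\operatorname{tp}(b/M)$. It then gets continuity of $\operatorname{tp}(b/M)\mapsto\mu(\varphi(x;b))$ from the fact that the sequence $\mu_i(\varphi(x;y))$ has no order property, via Grothendieck's double-limit criterion. Your Ces\`aro averaging replaces this with an elementary estimate that uses the full uniform bound on bad indices: the averages $F_N$ converge uniformly, so continuity is immediate.

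Your route also gives more. Each $F_N(y)$ is a finite convex combination of the functions $\varphi(a;y)$ with $a\in M$. Rounding the weights to rationals therefore shows directly that $\mu$ is $fam$ over $M$, which the paper only obtains afterwards by citing an external proposition. Your finite satisfiability argument is the same as the paper's.
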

\begin{proof}
Assume that $(\mu_i)$ is   a stable basic sequence over $M$ for $\mu$, as in Definition~\ref{sbs}.

(i): Observe that each $\mu_i$ can be uniformly approximated (in the standard analytic sense of uniform convergence) by average measures of elements from $M$. Hence, each $\mu_i$ is definable over $M$, and consequently $M$-invariant as well.

Now let $b$ be an arbitrary element of the monster model, and let $(b_j)$ be a sequence (or net) of elements of $M$ such that $\operatorname{tp}(b_j/M)$ converges to $\operatorname{tp}(b/M)$. By definability of the measures $\mu_i$, and since condition $(*)$ in Definition~\ref{sbs}(ii) holds for parameters from $M$, we have:
\[
\Bigl| \bigl\{ i \;:\; \bigl|  \sum_{j=1}^{\infty} r_{i,j} \cdot \varphi(a_{i,j}; b) - m_\varepsilon \bigr| > \varepsilon \bigr\} \Bigr| \;<\; n_{\varphi,\varepsilon},
\]  
where $r_{i,j}={\cal P}(A_{i,j})$. (Here, in condition $(*)$ of that definition, we take $k=1$.)
Now, since $\mu_i$ converges to $\mu$, it follows that
\[
\lim_{\varepsilon\to 0} m_{\varepsilon}=\mu(\varphi(x;b)).
\]
On the other hand, since the sequence $(\mu_i)$ excludes the order property by condition $(*)$, its limit $\mu$ is also definable over $M$. The latter assertion implicitly uses Grothendieck's criterion, as explained in the proof of Corollary~2.14  in~\cite{K-generic}.

(ii): Since each $\mu_i$ can be approximated by average measures of elements from $M$, and since $\mu$ is the limit of the sequence $(\mu_i)$, it follows that $\mu$ is finitely satisfiable in $M$.
\end{proof}
By Proposition~3.4 of \cite{Kyle-seq} together with the above observation, it follows that $\mu$ is $fam$ over $M$, although, as mentioned earlier, a substantially stronger result will be established in the next section.

The next definition imposes an additional condition on a stable basic sequence, ensuring that the sequence eventually becomes independent of the choice of events in the underlying probability space. This point will become clearer later.
\begin{Definition}[Independent stable basic sequence]
 \label{isbs} {\em Let $\mu(x)$ be a global Keisler measure and $M$ a small model of $T$. We say that $\mu$ has an  \emph{independent stable basic sequence over $M$}  if there exists  a sequence $(\mu_i(x))_{i=1}^{\infty}$ of $M$-atomic measures with  weights $\bar r$ (of the form given in Definition~\ref{atomic sequence} 
 , i.e., $\mu_i = \sum_{j} r_{i,j} \cdot a_{i,j}$) 
 such that it satisfies conditions (i)–(ii) of  Definition~\ref{sbs} and, in addition, fulfills the following  independence  condition: 
 \begin{itemize}
\item[(iii)]
\textbf{Probabilistic independence.} 
for every formula $\varphi(x;y)$, every $\varepsilon > 0$,  every $k \in \mathbb{N}$, every $k$-partition $\bar k=\big(B_1,\ldots,B_k\big)$ of $\Omega$ into measurable sets, and every $\bar b=(b_1,\dots,b_k) \in M^k$, 
and let  $m_\varepsilon := m_{\varphi,\varepsilon,\bar k,\bar b}$  be as in condition (ii), then we have:
$$ m = \sum_{i=1}^k r_i \cdot \mu(\varphi(x;b_i)),
$$
where $m = \lim_{\varepsilon\to 0} m_{\varphi,\varepsilon,\bar k,\bar b}$ and $r_i={\cal P}(B_i)$.
\end{itemize}
}
\end{Definition}

\begin{Remark}
In Theorem~\ref{main-irgs}, we will show that any measure admitting an independent stable basic sequence has the property that its canonical extension $r_\mu$, previously defined in this paper, is generically stable.
\end{Remark}

\section{$rgs$ and $irgs$ measures} 
In this section, we introduce the main notions of the present paper, namely $rgs$ and $irgs$, and then provide characterizations of these concepts using the preliminaries developed earlier. 
We subsequently compare these notions with several well-known concepts, including $fim$, $fam$, and related notions. 

\medskip
Let $\mu(x)$ be a global Keisler measure in a classical theory $T$. 
A global random type $p_\mu$ is called a \emph{random extension} of $\mu$ if for every $L$-formula $\varphi(x;y)$ and every parameter $b \in \mathcal U^{|y|}$, we have
\[
\mathbb{P}^{p_\mu}\llbracket \varphi(x;b^\Omega) \rrbracket
=
\mu(\varphi(x;b)).
\]

\medskip
We now present the main definition of this paper.
\begin{Definition} \label{rgs-irgs}
Let $\mu(x)$ be a global  measure definable over a small model $M$.
\begin{enumerate}
\item[(i)] We say that $\mu(x)$ is \emph{randomly generically stable ($rgs$) over $M$} if there exists a random type extension $p_{\mu}$ of $\mu$ such that $p_{\mu}$ is generically stable over the randomization model $M^{\Omega}$ (for some atomless probability space $(\Omega,\mathcal{P})$).
\item[(ii)] We say that $\mu(x)$ is \emph{independently randomly generically stable ($irgs$) over $M$} if the canonical random type extension $r_{\mu}$ of $\mu$, defined earlier, is generically stable over $M^{\Omega}$ (for some atomless probability space $(\Omega,\mathcal{P})$).
\end{enumerate}
\end{Definition}

The following theorem provides a characterization of $rgs$ measures in terms of the preliminaries developed earlier.
\begin{Theorem}  \label{main-rgs}
Let $\mu(x)$ be a global measure and $M$ a small model. Then the following conditions are equivalent:
\begin{enumerate}
\item[(i)] $\mu$ is $rgs$ over $M$.
\item[(ii)] 
$\mu$ admits a stable basic sequence over $M$.
\end{enumerate}
\end{Theorem}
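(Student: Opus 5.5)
The plan is to translate between the two sides through Fact~\ref{key-fact}(ii). This fact describes a generically stable $M^\Omega$-invariant random type by a sequence in the small model that converges uniformly to it. On the random side we may take this model to be $M^\Omega$, or its dense simple part $M_0^\Omega$. A simple element $c_i=\sum_j a_{i,j}^{A_{i,j}}\in M_0^\Omega$ is exactly the data of an $M$-atomic measure $\mu_i=\sum_j r_{i,j}\cdot a_{i,j}$ with $r_{i,j}=\mathcal P(A_{i,j})$. Moreover, for a simple parameter $g=\sum_s b_s^{B_s}$ we have
\[
\mathbb P\llbracket \varphi(c_i;g)\rrbracket=\sum_{s=1}^k\sum_j \mathcal P(A_{i,j}\cap B_s)\,\varphi(a_{i,j};b_s),
\]
which is precisely the quantity appearing in condition (ii) of Definition~\ref{sbs}. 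Note also that $\mathbb P\llbracket\varphi(c_i;b^\Omega)\rrbracket=\mu_i(\varphi(x;b))$ for constant parameters.

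For (i)$\Rightarrow$(ii), let $p_\mu$ be a generically stable random extension over $M^\Omega$. By Fact~\ref{key-fact} there is a sequence $(c_i)$ in $M^\Omega$ converging uniformly to $p_\mu$. First I replace each $c_i$ by a simple function $c_i'$ with $d(c_i,c_i')<2^{-i}$; predicates are $1$-Lipschitz in the $d$-metric for $\{0,1\}$-formulas, so uniform convergence survives, with $\varepsilon$ replaced by $\varepsilon/2$ and finitely many extra indices. Since $\sum_j r_{i,j}=1$ and we need countable index sets, we can split sets so that each partition is countable with positive pieces, or allow finite partitions padded trivially; I will not dwell on this. Writing $c_i'=\sum_j a_{i,j}^{A_{i,j}}$ gives $\mu_i$.

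Stability then follows from uniform convergence with $m_\varepsilon:=\mathbb P^{p_\mu}\llbracket\varphi(x;g)\rrbracket$. Extension follows by taking $g=b^\Omega$ for arbitrary $b\in\mathcal U$, since uniform convergence holds for all parameters in $\mathcal C$: $\mu_i(\varphi(x;b))\to \mathbb P^{p_\mu}\llbracket\varphi(x;b^\Omega)\rrbracket=\mu(\varphi(x;b))$.

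For (ii)$\Rightarrow$(i), take the stable basic sequence and let $c_i=\sum_j a_{i,j}^{A_{i,j}}$, which lies in $M^\Omega$ as an $L^1$-limit of simple functions. Condition (ii) says that $(\mathbb P\llbracket\varphi(c_i;g)\rrbracket)_i$ is ``eventually $\varepsilon$-close to a value'' with a bound $n_{\varphi,\varepsilon}$ uniform over simple $g\in M_0^\Omega$. By density and continuity this extends to all $g\in M^\Omega$. Then, using that each $c_i\in M^\Omega$ and that types over $\mathcal C$ of parameters are approximated by nets from $M^\Omega$ (as in the proof of the Fact on definability and finite satisfiability above), it extends to all $g\in\mathcal C$. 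In particular the sequence exhibits no order property with gap $>2\varepsilon$ in a uniform way, so $\lim_i\mathbb P\llbracket\varphi(c_i;g)\rrbracket$ exists for every $g\in\mathcal C$. I therefore define $p_\mu$ as the limit type of $(c_i)$ in $S_x(\mathcal C)$ (a cluster point, which is unique because all limits exist).

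I then check the following:
\begin{itemize}
\item $p_\mu$ is $M^\Omega$-invariant: it is finitely satisfiable in $M^\Omega$, and definable by the Grothendieck-type argument cited from \cite{K-generic}.
\item $(c_i)$ converges to $p_\mu$ uniformly in the sense of Fact~\ref{key-fact}(ii), with $\varphi^{p_\mu}(x;g)=\lim_{\varepsilon\to0}m_\varepsilon$ and the same bound $n_{\varphi,\varepsilon}$ (possibly with $2\varepsilon$). Quantifier elimination (Fact on $T^R$) ensures the formulas $\mathbb P\llbracket\varphi\rrbracket$ suffice.
\item $p_\mu$ is a random extension of $\mu$, by the extension condition applied at constants $b^\Omega$.
\end{itemize}
Fact~\ref{key-fact} then yields generic stability of $p_\mu$ over $M^\Omega$.

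The main obstacle is the passage from parameters in $M_0^\Omega$ (and $M$) to arbitrary $g\in\mathcal C$ while preserving the uniform count bound. Counting statements of the form ``fewer than $n$ indices are bad'' are preserved under limits of types only if the set of bad indices is handled finitely. I would argue that for any finite set $F$ of $n$ indices, the condition ``all $i\in F$ are $\varepsilon$-bad for some common $m$'' is a closed condition on $\operatorname{tp}(g/M^\Omega)$, because the $c_i$ lie in $M^\Omega$. It would therefore transfer from a net in $M_0^\Omega$ to $g$, with a slight loss $\varepsilon\to2\varepsilon$.
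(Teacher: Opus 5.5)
Your proposal follows the paper's proof. For (ii)$\Rightarrow$(i) you build the same elements $c_i=\sum_j a_{i,j}^{A_{i,j}}$ (the paper's $f_i$), compute $\mathbb P\llbracket\varphi(c_i;g)\rrbracket$ on simple parameters, push the count bound to all $g\in\mathcal C$ through limits of types over $M^\Omega$, and finish with Fact~\ref{key-fact}; your (i)$\Rightarrow$(ii) is exactly the reverse use of Fact~\ref{key-fact} that the paper leaves to the reader.

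The one thing to fix is the direction of your last step. What must be closed in $\operatorname{tp}(g/M^\Omega)$ is the \emph{good} condition ``there is $m$ with fewer than $n$ indices $i$ satisfying $|\mathbb P\llbracket\varphi(c_i;g)\rrbracket-m|>\varepsilon$'', since a closed \emph{bad} condition, as you phrased it, would not carry goodness from the net in $M_0^\Omega$ to $g$. The good condition is closed: for each finite $F$, ``every $i\in F$ is strictly $\varepsilon$-bad for $m$'' is open in the pair $(\operatorname{tp}(g/M^\Omega),m)$, so its union over $F$ has closed complement, and projecting along the compact factor $m\in[0,1]$ keeps it closed, with no loss in $\varepsilon$.
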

\begin{proof}
 (ii) $\Rightarrow$ (i): 
 Suppose $\mu$ admits the  stable basic sequence $(\mu_i = \sum_{j=1}^{\infty} r_{i,j} \cdot a_{i,j} : i < \omega)$ over $M$  (with respect to the atomless probability space $(\Omega,{\cal P})$). That is, $(\mu_i : i < \omega)$  satisfies the conditions of Definition~\ref{sbs}. 
Consider and fix the simple randomization model \(M_0^{\Omega}\).

By definition, there exists a sequence of countable partitions \(((A_{i,j})_{j=1}^\infty)_{i<\omega}\) of \(\Omega\) such that \(\mathbb{P}(A_{i,j}) = r_{i,j}$ for each $i,j$. Define the sequence $(f_i)_{i=1}^{\infty}$ in $M_0^\Omega$ by $f_i = \sum_{j=1}^{\infty} a_{i,j}^{A_{i,j}}$ for each $i$. We will show that this sequence defines an extension random type $p_\mu$ which is generically stable over $M^\Omega$.
 
First, for every formula $\varphi(x;y)$ and every $b\in\cal U$ we have
\begin{align*}
\lim_{i \to \infty} \mathbb{P}\llbracket \varphi(f_i; b^{\Omega}) \rrbracket
&= \lim_{i \to \infty}\sum_{j=1}^\infty \mathbb{P}\llbracket \varphi(a_{i,j}^{A_{i,j}}; b^{\Omega}) \rrbracket \\
&= \lim_{i \to \infty}\sum_{j=1}^\infty  \mathbb{P}(A_{i,j})\cdot\varphi(a_{i,j}; b)\\
&=
\lim_{i \to \infty}\mu_i(\varphi(x; b))=
\mu(\varphi(x;b)). \ \ \  (\dagger)
\end{align*}
Secondly, assume that a positive $\varepsilon$ is given and $g \in M_0^{\Omega}$ is of the form $\sum_{s=1}^{k} b_s^{B_s}$ where $(B_s)_{s=1}^k$ is a partition of measurable sets in $\Omega$ and $\bar b=(b_s)_{s=1}^k \in M$.
By assumption, 
there exist a natural number $n_{\varphi,\varepsilon}$  
(which depends only on \(\varphi\) and \(\varepsilon$, and not on $g$)
and a real number $m = m_{\varphi,\varepsilon,\bar k,\bar b}$ with the property that
\[
\Bigl| \bigl\{ i \;:\; \bigl| \sum_{s=1}^{k} \sum_{j=1}^{\infty} r_{i,j}^s \cdot \varphi(a_{i,j}; b_s) - m \bigr| > \varepsilon \bigr\} \Bigr| \;<\; n_{\varphi,\varepsilon} \ \ (*),
\]  
where $r_{i,j}^s={\cal  P}(A_{i,j}\cap B_s)$.

On the other hand, a simple calculation shows that:
\[
\begin{aligned}
\mathbb{P}\llbracket \varphi(f_i;g)\rrbracket
&=
\sum_{s=1}^{k}
\mathbb{P}\Bigl(
\llbracket \varphi(f_i;g)\rrbracket
\cap
B_s
\Bigr)
\\
&=
\sum_{s=1}^{k}
\mathbb{P}\Bigl(
\llbracket \varphi(f_i;b_s)\rrbracket
\cap
B_s
\Bigr)
\\
&=
\sum_{s=1}^{k}
\sum_{j=1}^{\infty}
\mathbb{P}(A_{i,j}\cap B_s)\cdot \varphi(a_{i,j};b_s)
\\
&=
\sum_{s=1}^{k}
\sum_{j=1}^{\infty}
r_{i,j}^{s}\cdot \varphi(a_{i,j};b_s). 
\end{aligned}
\]
Since \(M_0^\Omega$ is dense in $M^\Omega\), we may assume in $(*)$ that $g$ belongs to the latter model. By compactness, we may further assume that $g$ belongs to the monster model $\mathcal C$. 
To check the previous statement,
assume for a contradiction that the inequality $(*)$ fails for some
$g \in \mathcal{C}$. Let $(g_i)$ be a net (or sequence) in $M_0^\Omega$
such that
\[
\operatorname{tp}(g_i/M^\Omega)\longrightarrow \operatorname{tp}(g/M^\Omega).
\]
Since satisfaction of $(*)$ is preserved under taking limits of types over $M^\Omega$, there must exist some $g_i \in M_0^\Omega$ for which $(*)$ already fails.
This contradicts the fact that the inequality $(*)$ holds for all elements of $M_0^\Omega$.

This shows that the sequence of types $\operatorname{tp}(f_i/\mathcal{C})$ is convergent, and we denote its limit by $p_\mu$. Therefore, $p_\mu$ is a well-defined type which is moreover finitely satisfiable in $M^\Omega$.

In conclusion, for any formula $\varphi(x;y)$ and any $\varepsilon>0$, there exists $n_{\varphi,\varepsilon}<\omega$ such that for every $g\in \mathcal{C}^y$,
 \[
    \left|
    \left\{
    i<\omega :
    \left|
    \mathbb{P}\llbracket \varphi(f_i,g)\rrbracket
    -
    \mathbb{P}^{p_\mu}\llbracket \varphi(x;g)\rrbracket
    \right|
    \ge \varepsilon
    \right\}
    \right|
    \le n_{\varphi,\varepsilon}. \ \ \ (**)
\]

 By $(\dagger)$ and $(**)$,  $p_\mu$ extends $\mu$.
 Also, the sequence $(f_i)$ is uniformly convergent according to $(**)$.
 By the equivalence of (i) and (ii) in Fact \ref{key-fact}, it follows that $p_\mu$ is generically stable over $M^\Omega$.
 Therefore, $\mu$ is $rgs$ over $M$, and the proof is complete.

\medskip
(i) $\Rightarrow$ (ii): This direction can be obtained by applying the same scheme as in (ii) $\Rightarrow$ (i), together with Fact \ref{key-fact} in the reverse order. In view of the previous analysis of the more involved direction (ii) $\Rightarrow$ (i), no essentially new ingredients are required here, and we therefore omit the details and leave the verification to the reader.
\end{proof}

The following theorem provides a characterization of $irgs$ measures analogous to that of $rgs$.
\begin{Theorem} \label{main-irgs}
Let $\mu(x)$ be a global measure and $M$ a small model. Then the following conditions are equivalent:
\begin{enumerate}
\item[(i)] $\mu$ is $irgs$ over $M$.
\item[(ii)] $\mu$ admits an independent stable basic sequence over $M$.
\end{enumerate}
\end{Theorem}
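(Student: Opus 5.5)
The plan is to follow the scheme of the proof of Theorem~\ref{main-rgs}, and to use the independence condition (iii) to identify the limit type with the canonical extension $r_\mu$.

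For (ii) $\Rightarrow$ (i), I fix an independent stable basic sequence $(\mu_i=\sum_j r_{i,j}\cdot a_{i,j})$ with partitions $(A_{i,j})_j$. I set $f_i=\sum_j a_{i,j}^{A_{i,j}}\in M_0^\Omega$. Exactly as in the proof of Theorem~\ref{main-rgs}, the sequence $\operatorname{tp}(f_i/\mathcal C)$ converges uniformly, in the sense of $(**)$, to a type $p_\mu$, which is generically stable over $M^\Omega$ by Fact~\ref{key-fact}. It then remains to show $p_\mu=r_\mu$. By quantifier elimination it suffices to compare the values $\mathbb P\llbracket\varphi(x;g)\rrbracket$. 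Take first $g=\sum_{s=1}^k b_s^{B_s}\in M_0^\Omega$. The computation in the proof of Theorem~\ref{main-rgs} gives
\[
\mathbb P^{p_\mu}\llbracket\varphi(x;g)\rrbracket=\lim_i\sum_s\sum_j r^s_{i,j}\varphi(a_{i,j};b_s)=\lim_{\varepsilon\to0}m_{\varphi,\varepsilon,\bar k,\bar b}=m .
\]
Condition (iii) then yields $m=\sum_s \mathcal P(B_s)\mu(\varphi(x;b_s))=\mathbb P^{r_\mu}\llbracket\varphi(x;g)\rrbracket$.

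To pass to arbitrary $g\in\mathcal C$, I use that both types are definable over $M_0^\Omega$. For $r_\mu$ this is Fact~\ref{r_mu-fact}(i). For $p_\mu$ it follows because it is generically stable over $M^\Omega$, so it is definable (Fact~\ref{key-fact}(iii)), and $M_0^\Omega$ is dense in $M^\Omega$. Two types definable over the same set that agree on parameters from that set agree everywhere, since the defining predicates coincide on a dense subset and are continuous. Hence $r_\mu=p_\mu$ is generically stable, and $\mu$ is $irgs$.

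For (i) $\Rightarrow$ (ii), assume $r_\mu$ is generically stable over $M^\Omega$. Fact~\ref{key-fact}(ii) gives a sequence $(h_i)$ in $M^\Omega$ converging uniformly to $r_\mu$. By density, and using the uniform continuity of $\mathbb P\llbracket\varphi\rrbracket$ in the metric, I replace each $h_i$ by some $f_i\in M_0^\Omega$ with $d(f_i,h_i)<2^{-i}$. Uniform convergence survives this perturbation, at the cost of enlarging each $n_{\varphi,\varepsilon}$ by finitely many indices; for instance, use $\varepsilon/2$ and discard the $i$ with $2^{-i}\ge\varepsilon/2$. Writing $f_i=\sum_j a_{i,j}^{A_{i,j}}$, after discarding null pieces, I define $\mu_i=\sum_j\mathcal P(A_{i,j})\cdot a_{i,j}$. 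The same computation as before turns uniform convergence at parameters $g=\sum_s b_s^{B_s}$ into condition (ii), with $m_\varepsilon=\mathbb P^{r_\mu}\llbracket\varphi(x;g)\rrbracket$. Taking $k=1$ and $g=b^\Omega$ gives condition (i), since $r_\mu$ extends $\mu$; here $b\in\mathcal U$ is handled by letting $g=b^\Omega\in\mathcal C$ directly. Condition (iii) is then immediate from the definition of $r_\mu$ on $M_0^\Omega$. There is one subtlety: in (iii) the quantity $m_\varepsilon$ is only specified up to $\varepsilon$, so I would interpret $m$ as the limit of the sums $\sum_s\sum_j r^s_{i,j}\varphi(a_{i,j};b_s)$, which exists by (ii).

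I expect the main obstacle to be the identification $p_\mu=r_\mu$ beyond parameters in $M_0^\Omega$. This needs both the definability of $p_\mu$ over $M^\Omega$ and the continuity of the definitions, so that agreement on a dense set propagates. I would argue it via Fact~\ref{key-fact}(iii) and the uniqueness of definable extensions, rather than through the net-based definition of $r_\mu$ directly.
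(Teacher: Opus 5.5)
Your proposal is correct and follows essentially the same route as the paper. In (ii)$\Rightarrow$(i) you build $p_\mu$ as in Theorem~\ref{main-rgs}, use the independence condition to get $p_\mu=r_\mu$ on parameters from $M_0^\Omega$, and propagate this through definability over the model $M^\Omega$ (in which $M_0^\Omega$ is dense); in (i)$\Rightarrow$(ii) you read off the sequence from Fact~\ref{key-fact}(ii), get independence from the definition of $r_\mu$, and spell out what the paper leaves implicit — approximating the uniformly convergent sequence in $M^\Omega$ by simple functions in $M_0^\Omega$ and identifying $m$ with the limit, i.e.\ the direction of Theorem~\ref{main-rgs} that the paper omits.
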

\begin{proof}
(ii) $\Rightarrow$ (i): 
In this case, $\mu$ has a stable basic sequence with the probabilistic independence property.
Let $p_\mu$ be the generically stable random type defined in the proof of Theorem~\ref{main-rgs}, associated to the independent stable basic sequence of $\mu$. 
For every $g= \sum_{j=1}^k b_j^{B_j}$ in $M_0^{\Omega}$,  we have 

\begin{align*}
\mathbb{P}^{r_\mu}\llbracket \varphi(x;g) \rrbracket
&=
\mathbb{P}^{r_\mu}\llbracket \varphi(x;\sum_{j=1}^k b_j^{B_j})\rrbracket
\\
&=
\sum_{j=1}^k \mathbb{P}(B_j)\cdot \mu(\varphi(x;b_j))
\\
&=
\sum_{j=1}^k \mathbb{P}(B_j)\cdot \mathbb{P}^{p_\mu}\llbracket \varphi(x;b_j)\rrbracket
\\
&\overset{(1)}{=}
\mathbb{P}^{p_\mu}\llbracket \varphi(x;g)\rrbracket. \ \ \ \ \ \ (*)
\end{align*}
Equality (1) follows from the probabilistic independence property in Definition~\ref{isbs}.

Now, since both \(p_\mu\) and \(r_\mu\) are definable over $M^\Omega$ (and $M_0^\Omega$ is dense in this model), the equality $(*)$ holds for every $g$ in the monster model $\mathcal{C}$ as well; that is, $r_\mu = p_\mu$. Hence $r_\mu$ is generically stable, and $\mu$ is $irgs$ over $M$.

(i) $\Rightarrow$ (ii): Because $r_\mu$ is generically stable, by  Fact~\ref{key-fact}, this type is of the form $p_\mu$ which was defined in Theorem~\ref{main-rgs}.
Therefore $\mu$ is an $rgs$. Hence it suffices to show that its stable basic sequence satisfies condition (iii) in Definition~\ref{isbs} (i.e., probabilistic independence). But the latter clearly follows from the definition of $r_\mu$.
\end{proof}

For a global measure $\mu$ which is definable over $M$, the random type $r_\mu$ is the unique definable extension random type over
\(
M'=\{a^\Omega : a\in M\}
\).
Indeed, by Fact~\ref{r_mu-fact}, $r_\mu$ is definable over $M^\Omega$, and it is also $M'$-invariant. Hence, it is definable over $M'$. (The $M'$-invariance of $r_\mu$ follows directly from its definition.) 
In general, however, there is no reason to expect that every extension random type of $\mu$ which is definable over $M^\Omega$ must be $M'$-invariant. Nevertheless, this does hold in the special case where $\mu$ is a type, as shown below.
\begin{Fact} \label{p_mu=r_mu}
Suppose $\mu$ is a global type definable over $M$. Then $r_\mu$ is the only random type extending $\mu$ that is definable over $M^\Omega$.
\end{Fact}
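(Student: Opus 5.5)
To prove Fact~\ref{p_mu=r_mu}, let $p$ be any global random type extending $\mu$ and definable over $M^\Omega$, and show $p=r_\mu$. Since both $p$ and $r_\mu$ are definable over $M^\Omega$, and $M_0^\Omega$ is dense in $M^\Omega$, it suffices to show that they agree on every formula $\mathbb{P}\llbracket \varphi(x;g)\rrbracket$ with $g=\sum_{s=1}^k b_s^{B_s}\in M_0^\Omega$. By quantifier elimination these formulas determine the type over $M^\Omega$, and definability then propagates equality to all parameters in $\mathcal C$, exactly as in the proof of Theorem~\ref{main-irgs}. So the goal is
\[
\mathbb{P}^{p}\llbracket \varphi(x;g)\rrbracket=\sum_{s=1}^k \mathcal{P}(B_s)\cdot\mu(\varphi(x;b_s)).
\]

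The key point is that $\mu$ is a type, so for each $s$ either $\varphi(x;b_s)\in\mu$ or $\neg\varphi(x;b_s)\in\mu$. Since $p$ extends $\mu$, we get $\mathbb{P}^p\llbracket\varphi(x;b_s)\rrbracket\in\{0,1\}$ for each $s$. Hence in any realization $f$ of $p$ (restricted to a suitable small set), the event $E_s=\llbracket \varphi(f;b_s)\rrbracket$ has probability $0$ or $1$. Consider $\mathbb{P}\llbracket \varphi(f;g)\rrbracket=\mathcal{P}\bigl(\bigcup_s (\llbracket\varphi(f;b_s)\rrbracket\cap B_s)\bigr)$. This step uses that $g$ equals $b_s$ on $B_s$, which is expressible in $T^R$: $\mathbb{P}\llbracket g=b_s\rrbracket\ge\mathcal{P}(B_s)$, and the event $\llbracket g=b_s\rrbracket$ is definable. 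If $\mathcal{P}(E_s)=1$, then $\mathcal{P}(E_s\cap B_s)=\mathcal{P}(B_s)$; if $\mathcal{P}(E_s)=0$, then this intersection has probability $0$. Summing over $s$ gives $\sum_s \mathcal{P}(B_s)\mu(\varphi(x;b_s))$, the value $r_\mu$ assigns. To carry this out rigorously inside the continuous theory, I would take $f\models p|_{M^\Omega}$ in $\mathcal C$ and use the standard $T^R$ facts that events form a measure algebra, $\llbracket\cdot\rrbracket$ commutes with Boolean operations, and $\llbracket \varphi(f;g)\rrbracket\cap\llbracket g=b_s\rrbracket=\llbracket\varphi(f;b_s)\rrbracket\cap\llbracket g=b_s\rrbracket$ up to null sets.

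This argument never uses definability of $p$ on $M_0^\Omega$ parameters. Definability over $M^\Omega$ is used only to pass from $M_0^\Omega$ to all of $\mathcal C$, which is also why the statement restricts to types definable over $M^\Omega$. The same computation shows the restriction of $p$ to $M^\Omega$ is forced.

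The main obstacle is justifying the event-level computation for a nonstandard realization $f\in\mathcal C$, which is not a genuine function on $\Omega$. I would handle it by noting that the identity $\mathbb{P}\llbracket\varphi(x;g)\rrbracket=\sum_s\mathbb{P}\llbracket\varphi(x;b_s)\wedge g=b_s\rrbracket$ holds in every simple randomization model for all $x$, hence is part of $T^R$. Then $\mathbb{P}\llbracket\varphi(x;b_s)\wedge g=b_s\rrbracket$ is squeezed between $0$ and $\mathbb{P}\llbracket\varphi(x;b_s)\rrbracket$, and $\mathcal{P}(B_s)-\mathbb{P}\llbracket\neg\varphi(x;b_s)\rrbracket$. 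With the value $\mathbb{P}^p\llbracket\varphi(x;b_s)\rrbracket\in\{0,1\}$, this pins it to $\mathcal{P}(B_s)\mu(\varphi(x;b_s))$.
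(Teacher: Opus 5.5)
Your proof is correct and is essentially the paper's argument: for simple $g=\sum_s b_s^{B_s}$ you split $\mathbb{P}^{p}\llbracket\varphi(x;g)\rrbracket$ along the $B_s$, use that $\mathbb{P}^{p}\llbracket\varphi(x;b_s)\rrbracket=\mu(\varphi(x;b_s))\in\{0,1\}$ because $\mu$ is a type, and then pass to all of $\mathcal{C}$ via definability over $M^\Omega$ and density of $M_0^\Omega$. Your squeezing argument is a rigorous version of the step the paper writes informally as $\mathbb{P}^{p_\mu}\bigl(\llbracket\varphi(x;b_i^\Omega)\rrbracket\cap B_i\bigr)=\mu(\varphi(x;b_i))\cdot\mathbb{P}(B_i)$; two details to tidy up there: take the $b_s$ pairwise distinct, which you may assume by merging the $B_s$, and note that the splitting identity is a condition with parameters that holds in $M^\Omega$ and transfers to $\mathcal{C}$ by elementarity, rather than being literally part of $T^R$.
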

\begin{proof} 
Let $p_\mu$ be a random type extending $\mu$ which is definable over $M^\Omega$.
Since \(\mu\) is a type, the equality
\[
\mathbb{P}^{p_\mu}\llbracket\varphi(x;g)\rrbracket
=
\mathbb{P}^{r_\mu}\llbracket\varphi(x;g)\rrbracket
\]
holds for every formula \(\varphi(x;y)\) and every \(g\in M_0^\Omega\).
To verify this,
suppose $g$ is of the form $\sum_{i=1}^{k} b_i^{B_i}$, where $B_1,\dots,B_k$ is a partition of the measurable subsets of $\Omega$ and the $b_i$ are elements of $M$.
Then
\begin{align*}
\mathbb{P}^{p_\mu}\llbracket \varphi(x;g) \rrbracket
&= \mathbb{P}^{p_\mu}\Bigl\llbracket \varphi\bigl(x;\sum_{i=1}^k b_i^{B_i}\bigr) \Bigr\rrbracket \\
&= \sum_{i=1}^k \mathbb{P}^{p_\mu}\bigl( \llbracket \varphi(x;b_i^{\Omega}) \rrbracket \cap B_i \bigr)\\
&\stackrel{*}{=} \sum_{i=1}^k \mu(\varphi(x;b_i)) \cdot \mathbb{P}(B_i) \\
&\stackrel{\dagger}{=}  \mathbb{P}^{r_\mu}\Bigl\llbracket \varphi\bigl(x;\sum_{i=1}^k b_i^{B_i}\bigr) \Bigr\rrbracket=
\mathbb{P}^{r_\mu}\llbracket \varphi(x;g) \rrbracket.
\end{align*}
The starred equality follows from the fact that $\mu$ is a type, and therefore $\mu(\varphi(x;b_i))$ is either $0$ or $1$.
(Recall that
\(
\mathbb{P}^{p_\mu}\llbracket \varphi(x; b_i^\Omega) \rrbracket
=
\mu\bigl(\varphi(x; b_i)\bigr)
\).)
The equality marked by $\dagger$ is simply the definition of $r_\mu$. 

Now, since $M_0^\Omega$ is dense in $M^\Omega$ and both $p_\mu$ and $r_\mu$ are definable over $M^\Omega$, it follows that
\(
p_\mu = r_\mu
\).
\end{proof}

The next proposition shows that, for types, the notions of $rgs$ and $irgs$ coincide, and gives another characterization of generic stability.
\begin{Proposition} \label{type-case}
Suppose $\mu$ is a global type. Then the following are equivalent:
\begin{enumerate}
\item[(i)] $\mu$ is $rgs$ over $M$.
\item[(ii)] $\mu$ is $irgs$ over $M$.
\item[(iii)] $\mu$ is generically stable over $M$.
\end{enumerate}
\end{Proposition}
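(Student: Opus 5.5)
We will prove the cycle (ii) $\Rightarrow$ (i) $\Rightarrow$ (ii) and (ii) $\Leftrightarrow$ (iii). For the whole argument it helps to note first that generic stability, $rgs$ and $irgs$ over $M$ all require $\mu$ to be definable over $M$. Definition~\ref{rgs-irgs} assumes definability outright. For a generically stable type, definability over $M$ is part of the standard theory (compare Fact~\ref{key-fact}(iii) and Fact~\ref{generic-fact-1}). So $r_\mu$ is always available, and Fact~\ref{p_mu=r_mu} applies to every case under consideration.

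\emph{(ii) $\Rightarrow$ (i).} This is immediate from the definitions. If $r_\mu$ is generically stable over $M^\Omega$, then $r_\mu$ itself witnesses the existence of a random extension $p_\mu$ of $\mu$ that is generically stable over $M^\Omega$.

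\emph{(i) $\Rightarrow$ (ii).} Let $p_\mu$ be a random extension of $\mu$ that is generically stable over $M^\Omega$.
\begin{itemize}
\item By Fact~\ref{key-fact}(iii) (or equivalence with $fim$ in Fact~\ref{generic-fact-1}), $p_\mu$ is definable over $M^\Omega$. The one point to check is that the small model supplied by Fact~\ref{key-fact}(iii) can be taken to be $M^\Omega$ itself. This holds because an $M^\Omega$-invariant generically stable type is definable over $M^\Omega$, which is the continuous analogue of the classical fact and is implicit in condition (ii) of Fact~\ref{key-fact} via the Grothendieck argument already used in the Fact following Definition~\ref{sbs}.
\item Fact~\ref{p_mu=r_mu} then gives $p_\mu = r_\mu$.
\item Hence $r_\mu$ is generically stable over $M^\Omega$, so $\mu$ is $irgs$.
\end{itemize}

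\emph{(ii) $\Leftrightarrow$ (iii).} By Fact~\ref{generic-fact-1}, $\mu$ is generically stable over $M$ if and only if it is $fim$ over $M$. By Fact~\ref{generic-fact-1} applied in $T^R$, $r_\mu$ is generically stable over $M^\Omega$ if and only if it is $fim$. Fact~\ref{r_mu-fact}(iv) states that, since $\mu$ is a type, $\mu$ is $fim$ if and only if $r_\mu$ is $fim$. Chaining these three equivalences gives (ii) $\Leftrightarrow$ (iii).

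The main obstacle is a base-point mismatch between the $fim$ notions. Fact~\ref{r_mu-fact}(iv) is stated for $fim$ of $r_\mu$ over some base, and Definition~\ref{rgs-irgs} asks for generic stability over $M^\Omega$. I will resolve this by using that $r_\mu$ is definable over $M'\subseteq M^\Omega$ (Fact~\ref{r_mu-fact}(i) and the discussion before Fact~\ref{p_mu=r_mu}). Consequently $fim$ over $M'$ transfers upward to $M^\Omega$, and $fim$ over $M^\Omega$ gives generic stability over $M^\Omega$ by Fact~\ref{generic-fact-1}.

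If this transfer proves delicate, there is an alternative route for (iii) $\Rightarrow$ (ii). Take a sequence $(c_i)$ in $M$ converging uniformly to $\mu$, as in Fact~\ref{key-fact}(ii). The constant random variables $c_i^\Omega$ then satisfy
\[
\mathbb{P}\llbracket\varphi(c_i^\Omega;g)\rrbracket=\sum_s \mathcal{P}(B_s)\,\varphi(c_i;b_s)
\]
for simple parameters $g=\sum_s b_s^{B_s}$, and the bound $n_{\varphi,\varepsilon}$ transfers directly. This yields an independent stable basic sequence, so Theorem~\ref{main-irgs} shows that $\mu$ is $irgs$.
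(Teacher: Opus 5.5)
Your argument is correct and is essentially the paper's proof. (ii)$\Rightarrow$(i) is immediate, (i)$\Rightarrow$(ii) goes through Fact~\ref{p_mu=r_mu}, and (ii)$\Leftrightarrow$(iii) chains Fact~\ref{generic-fact-1} with Fact~\ref{r_mu-fact}(iv); you usefully make explicit that generic stability over $M^\Omega$ gives $fim$, hence definability, over $M^\Omega$, which the paper leaves implicit.

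Your fallback route for (iii)$\Rightarrow$(ii) via the constants $c_i^\Omega$ also works, but the bound does not transfer ``directly'', since the union of the exceptional index sets for $b_1,\dots,b_k$ can grow with $k$. Instead, each $b_s$ has at most $n_\varphi$ indices with $\varphi(c_i;b_s)\neq\mu(\varphi(x;b_s))$, so a weighted count gives $\sum_i \bigl|\sum_s \mathcal{P}(B_s)\bigl(\varphi(c_i;b_s)-\mu(\varphi(x;b_s))\bigr)\bigr| \le \sum_s \mathcal{P}(B_s)\, n_\varphi = n_\varphi$. Hence at most $n_\varphi/\varepsilon$ indices are $\varepsilon$-bad, uniformly in $k$, $\bar b$ and the partition.
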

\begin{proof}
Conditions (ii) and (iii) are equivalent by Fact~\ref{r_mu-fact}, together with the fact that $fim$ and generic stability coincide for types  (Fact~\ref{generic-fact-1}).  
 (ii) implies (i) by definition. The implication (i) $\Rightarrow$ (ii) follows from Fact~\ref{p_mu=r_mu}.
\end{proof}

Before comparing these notions, we recall the concept of self-averaging, introduced in Definition~2.2 of \cite{CGH}. A global measure $\mu(x)$ definable over a model $M$ is said to be \emph{self-averaging (over $M$)} if, for every measure $\lambda = \lambda(x_i : i < \omega)$ satisfying $\lambda|_M = \mu^{(\omega)}|_M$, the measure $\lambda$ converges to $\mu$ in the sense that, for every formula $\varphi(x)$ with parameters from the monster model $\mathcal{U}$,
\[
\lim_{i \to \infty} \lambda(\varphi(x_i)) = \mu(\varphi(x)).
\]

The following fact, essentially an adaptation of Remark~3.21 in \cite{CGH}, clarifies the relationship between the notions of $rgs$ and self-averaging.
\begin{Fact} \label{self-fact}
For a global measure $\mu$ definable over $M$, the following conditions are equivalent:
\begin{enumerate}
\item[(i)] $\mu$ is self-averaging over $M$.
\item[(ii)] For every extending random type $p_\mu$ of $\mu$ which is $M^\Omega$-invariant and satisfies
\[
(p_\mu)^{(\omega)}|_{M^\Omega} = (r_\mu)^{(\omega)}|_{M^\Omega},
\]
and for every Morley sequence $(a_i)$  in  $p_\mu$, we have
\[
\lim_{i \to \infty} \mathbb{P}^{p_\mu}\llbracket \varphi(a_i;b) \rrbracket
=
\mu(\varphi(x;b))
\]
for every $b \in \mathcal{U}$ and every formula $\varphi(x;y)$.
\end{enumerate}
\end{Fact}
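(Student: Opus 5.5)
The plan is to reduce both directions to the corresponding statement in \cite{CGH} (Remark~3.21 there), which identifies self-averaging of $\mu$ with a convergence property of Morley sequences in $r_\mu$. The extra ingredient we need is that any $M^\Omega$-invariant random extension $p_\mu$ satisfying $(p_\mu)^{(\omega)}|_{M^\Omega}=(r_\mu)^{(\omega)}|_{M^\Omega}$ has exactly the same Morley sequences over $M^\Omega$, as far as types over $M^\Omega$ are concerned, as $r_\mu$. Indeed, a Morley sequence $(a_i)$ in $p_\mu$ over $M^\Omega$ realizes $(p_\mu)^{(\omega)}|_{M^\Omega}$. Since the parameters $b^\Omega$ with $b\in M$ lie in $M^\Omega$, the quantities $\mathbb{P}\llbracket\varphi(a_i;b)\rrbracket$ for $b\in M$ depend only on this restricted type. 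For general $b\in\mathcal U$ we will instead pass through the classical side, as in step two below.

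\textbf{From random types to measures.} Fix $p_\mu$ as in (ii) and a Morley sequence $(a_i)$ in $p_\mu$. By quantifier elimination for $T^R$ (the fact quoted from \cite{BK}, Thm.~2.9), $\operatorname{tp}((a_i)_{i<\omega}/M^\Omega)$ is determined by the values $\mathbb{P}\llbracket\psi(a_{i_1},\dots,a_{i_n};\bar c)\rrbracket$ with $\bar c\in M$. Using Fact~\ref{r_mu-fact}(ii), namely $(r_\mu)^{(n)}=r_{\mu^{(n)}}$, these values equal $\mu^{(n)}(\psi(x_1,\dots,x_n;\bar c))$.

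Next, define a Keisler measure $\lambda(x_i:i<\omega)$ by
\[
\lambda(\psi(x_{i_1},\dots,x_{i_n};d))=\mathbb{P}\llbracket\psi(a_{i_1},\dots,a_{i_n};d^\Omega)\rrbracket
\]
for $d\in\mathcal U$. This is finitely additive because $\llbracket\cdot\rrbracket$ is a Boolean homomorphism into events. Then $\lambda|_M=\mu^{(\omega)}|_M$, and $\lim_i\lambda(\varphi(x_i;b))=\lim_i\mathbb{P}\llbracket\varphi(a_i;b)\rrbracket$. Hence (i)$\Rightarrow$(ii): apply self-averaging to this $\lambda$, and note that $\mathbb{P}^{p_\mu}\llbracket\varphi(a_i;b)\rrbracket$ in the statement is read as $\mathbb{P}\llbracket\varphi(a_i;b)\rrbracket$.

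\textbf{From measures to random types.} Conversely, for (ii)$\Rightarrow$(i) we must realize an arbitrary $\lambda$ with $\lambda|_M=\mu^{(\omega)}|_M$ as the ``law'' of some Morley sequence in a suitable $p_\mu$. I would proceed as in \cite{CGH}.
\begin{itemize}
\item[(a)] Represent $\lambda$ by a random tuple $(a_i)$ in $\mathcal C$. Take the type over $\mathcal U^\Omega$ given by $\mathbb{P}\llbracket\psi(\bar x;d)\rrbracket=\lambda(\psi(\bar x;d))$; this is consistent, e.g.\ by Ben Yaacov's correspondence between Keisler measures and random types.
\item[(b)] Its restriction to $M^\Omega$ coincides with $(r_\mu)^{(\omega)}|_{M^\Omega}$, by the computation above together with quantifier elimination.
\item[(c)] Since $r_\mu$ is $M^\Omega$-invariant (it is definable over $M^\Omega$ by Fact~\ref{r_mu-fact}(i)), $(a_i)$ is indiscernible over $M^\Omega$ with the type of a Morley sequence of $r_\mu$. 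By the usual invariant-extension argument (Ramsey and compactness), there is an $M^\Omega$-invariant global $p$ whose Morley sequence over $M^\Omega$ has this type. Moreover, one can arrange that $(a_i)$ is itself a Morley sequence in $p$ over $\mathcal U^\Omega$, or at least one with the same type over $\mathcal U^\Omega$.
\item[(d)] Conclude by (ii) that $\lambda(\varphi(x_i;b))\to\mu(\varphi(x;b))$.
\end{itemize}

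\textbf{Main obstacle.} The hard point is step (c): producing an $M^\Omega$-invariant extension $p_\mu$ of $\mu$ for which the given sequence (with its full type over the parameters $b^\Omega$, $b\in\mathcal U$) is a Morley sequence. An indiscernible sequence realizing the right type over $M^\Omega$ need not be Morley over larger sets for any invariant type. I would try to sidestep this by weakening what is needed. Only the limit behaviour over single parameters $b^\Omega$ matters, so it suffices to find an invariant $p$ and a Morley sequence whose type over $M^\Omega\cup\{b^\Omega\}$ agrees with that of $(a_i)$. I would take $p$ to be an $M^\Omega$-invariant heir/coheir-type limit of the types $\operatorname{tp}(a_i/\mathcal C)$ along an ultrafilter, and then check that $p$ extends $\mu$ and has the required $\omega$-power on $M^\Omega$, following Remark~3.21 of \cite{CGH}.
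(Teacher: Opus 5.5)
\textbf{Verdict.} Your (i)$\Rightarrow$(ii) argument is correct; the paper leaves that direction to the reader. Your plan for (ii)$\Rightarrow$(i) follows the paper's route: realize $\lambda$ by a random sequence, identify its type over $M^\Omega$, then apply (ii). However, that direction has two gaps as written.

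\textbf{Step (b) uses a false claim.} Quantifier elimination for $T^R$ does not say that $\operatorname{tp}(\bar a/M^\Omega)$ is determined by the values $\mathbb{P}\llbracket\psi(\bar a;\bar c)\rrbracket$ with $\bar c\in M$. It is determined by the values at all parameters from $M^\Omega$, including non-constant ones. Example~\ref{ex-rgs-not-irgs} is a counterexample. Take $f=a_1^{B_1}+a_2^{B_2}\in M^\Omega$ and $r_\mu$ for $\mu=\mathrm{Av}(a_1,a_2)$. They agree at every constant parameter, but at the parameter $f$ itself $\mathbb{P}\llbracket x=y\rrbracket$ takes the value $1$ for $\operatorname{tp}(f/M^\Omega)$ and $\tfrac12$ for $r_\mu$. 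So a random sequence that merely matches $\lambda$ at constant parameters $d^\Omega$ need not realize $(r_\mu)^{(\omega)}|_{M^\Omega}$, since it may be correlated with events of $M^\Omega$.

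\textbf{Fix for (b).} Choose $(a_i)$ realizing $r_\lambda|_{\mathcal U^\Omega}$. This is the independent extension: on simple parameters it is given by $\sum_j\mathcal P(A_j)\lambda(\psi(\bar x;d_j))$, and it extends by continuity; this makes sense for any $\lambda$, definable or not. Its restriction to $M^\Omega$ is computed from $\lambda|_M=\mu^{(\omega)}|_M$ alone. Hence it equals $r_{\mu^{(\omega)}}|_{M^\Omega}=(r_\mu)^{(\omega)}|_{M^\Omega}$ by Fact~\ref{r_mu-fact}(ii). This is exactly the paper's choice. In (i)$\Rightarrow$(ii) you only use the trivial direction of your claim, so that direction is unaffected.

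\textbf{Step (c) is not actually an obstacle, and your workaround does not work.} An ultrafilter limit of $\operatorname{tp}(a_i/\mathcal C)$ has no reason to be $M^\Omega$-invariant, since the $a_i$ do not lie in $M^\Omega$. Nor is there any reason for $(a_i)$ to be a Morley sequence in that limit over $\mathcal U^\Omega$. So (ii)$\Rightarrow$(i) is left unproved as written.

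\textbf{The missing idea for (c).} In (ii), Morley sequences are taken over $M^\Omega$. This is the reading you yourself adopt in your first paragraph, where you note that all admissible $p_\mu$ share their Morley sequences over $M^\Omega$ with $r_\mu$. It is also the reading the paper's proof uses. Moreover, $p_\mu=r_\mu$ trivially satisfies the hypotheses of (ii). Once $(a_i)\models(r_\mu)^{(\omega)}|_{M^\Omega}$, it is by definition a Morley sequence in $r_\mu$ over $M^\Omega$. Then (ii) gives directly that $\lambda(\varphi(x_i;b))=\mathbb{P}\llbracket\varphi(a_i;b)\rrbracket\to\mu(\varphi(x;b))$ for all $b\in\mathcal U$. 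No invariant type making $(a_i)$ Morley over a larger set is needed.
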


\begin{proof}
We only verify the implication (ii) $\Rightarrow$ (i), since the converse is easier and left to the reader.

Let $\lambda(x_i : i < \omega)$ be a measure such that
\(
\lambda|_M = \mu^{(\omega)}|_M.
\)
Then
\[
r_\lambda|_{M^\Omega}
=
r_{\mu^{(\omega)}}|_{M^\Omega}
=
(r_\mu)^{(\omega)}|_{M^\Omega}.
\]
Therefore, if $(a_i)$ is a Morley sequence in $r_\lambda$ over $\mathcal{U}^\Omega$, then
\(
(a_i) \models (r_\mu)^{(\omega)}|_{M^\Omega}.
\)
By condition~(ii), we obtain
\[
\lim_{i \to \infty} \lambda(\varphi(x_i;b))
=
\lim_{i \to \infty} \mathbb{P}^{p_\mu}\llbracket \varphi(a_i;b) \rrbracket
=
\mu(\varphi(x;b))
\]
for every $b \in \mathcal{U}$ and every formula $\varphi(x;y)$.
\end{proof}

As noted in the fact above, self-averaging only guarantees convergence for parameters from $\mathcal{U}$, and not necessarily for parameters from $\mathcal{C}$. The main motivation behind the definitions of $rgs$ and $irgs$ in this paper is precisely to ensure convergence for parameters from $\mathcal{C}$ as well.

The next corollary summarizes all the previous observations and compares the different notions introduced in this section. Recall that by Fact~\ref{generic-fact-1}, $fim$ and generic stability coincide for types (in both discrete and continuous logic).

\begin{Corollary} \label{comparing}
For a definable global measure, the following implications hold.
\begin{enumerate}
\item[(i)] $fim  \Rightarrow rgs \Rightarrow fam$.
\item[(ii)] $fim \Rightarrow irgs \Rightarrow \mathrm{self\text{-}averaging}$.
\item[(iii)] For types, $fim \Leftrightarrow  irgs \Leftrightarrow \mathrm{self\text{-}averaging}$.
\end{enumerate}
\end{Corollary}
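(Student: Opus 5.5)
The plan is to prove each implication separately, mostly by combining Fact~\ref{r_mu-fact}, Fact~\ref{key-fact}, Fact~\ref{self-fact} and Proposition~\ref{type-case}.

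For (i) and (ii), start with $fim\Rightarrow irgs$. If $\mu$ is $fim$ over $M$, then $r_\mu$ is $fim$ by Fact~\ref{r_mu-fact}(iii). Fact~\ref{generic-fact-1} says that, in continuous logic, $fim$ is equivalent to generic stability. Hence $r_\mu$ is generically stable, which is exactly $irgs$. Since $r_\mu$ is a random extension of $\mu$, the implication $irgs\Rightarrow rgs$ is immediate from Definition~\ref{rgs-irgs}, and this gives $fim\Rightarrow rgs$.

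For $rgs\Rightarrow fam$, apply Theorem~\ref{main-rgs} to obtain a stable basic sequence. By the Fact following Definition~\ref{isbs}, $\mu$ is then definable over $M$ and finitely satisfiable in $M$. Moreover, the uniform convergence of $(\mu_i)$ on $M$-parameters with $k=1$ shows that the sequence $(\mu_i)$, each member close to an average of elements of $M$, has no order property. As the paper already remarks, Proposition~3.4 of \cite{Kyle-seq} then yields that $\mu$ is $fam$.

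For $irgs\Rightarrow$ self-averaging, use Fact~\ref{self-fact}(ii). Let $p_\mu$ be $M^\Omega$-invariant with $(p_\mu)^{(\omega)}|_{M^\Omega}=(r_\mu)^{(\omega)}|_{M^\Omega}$, and let $(a_i)$ be a Morley sequence in $p_\mu$. Then $(a_i)$ realizes $(r_\mu)^{(\omega)}|_{M^\Omega}$, so it is a Morley sequence of $r_\mu$ over $M^\Omega$. Since $r_\mu$ is generically stable, Fact~\ref{generic-fact-1}(ii) gives $\lim_i \mathbb P\llbracket\varphi(a_i;b)\rrbracket=\mathbb P^{r_\mu}\llbracket\varphi(x;b)\rrbracket=\mu(\varphi(x;b))$ for $b\in\mathcal U$. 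This is condition (ii) of Fact~\ref{self-fact}.

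The step I expect to need the most care is justifying that a sequence realizing $(r_\mu)^{(\omega)}|_{M^\Omega}$ counts as a Morley sequence in $r_\mu$ over $M^\Omega$ in the sense of Fact~\ref{generic-fact-1}. This holds by definition, since Morley sequences over $M^\Omega$ are exactly the realizations of that restricted type.

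For (iii), $fim\Leftrightarrow irgs$ for types is Proposition~\ref{type-case} together with Fact~\ref{generic-fact-1}. By (ii), it then remains to show that self-averaging implies $fim$ for a type $\mu=p$. Take any Morley sequence $(a_i)$ of $p$ over $M$ and let $\lambda=\operatorname{tp}((a_i)/\mathcal U)$, so that $\lambda|_M=p^{(\omega)}|_M$. Self-averaging gives $\lim_i\varphi(a_i;b)=\varphi^p(x;b)$ for every $b$, since each $\lambda(\varphi(x_i;b))$ is $0$ or $1$. This is condition (ii) of Fact~\ref{generic-fact-1} in the classical setting, so $p$ is generically stable, hence $fim$.
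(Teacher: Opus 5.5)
Your proposal is correct and follows essentially the same route as the paper. You get $fim\Rightarrow irgs\Rightarrow rgs$ from Fact~\ref{r_mu-fact}(iii) and the equivalence of $fim$ with generic stability. You get $rgs\Rightarrow fam$ from the stable basic sequence and Proposition~3.4 of \cite{Kyle-seq}, and $irgs\Rightarrow$ self-averaging by feeding convergence of Morley sequences in $r_\mu$ into Fact~\ref{self-fact}. For types, you close the cycle with Proposition~\ref{type-case} and the direct argument that self-averaging forces Morley sequences to converge.

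You are simply more explicit than the paper, and you pass from $r_\mu$ being $fim$ to $irgs$ directly through Fact~\ref{generic-fact-1} rather than via Theorem~\ref{main-irgs}. One small citation fix: the Fact giving definability and finite satisfiability follows Definition~\ref{sbs}, not Definition~\ref{isbs}.
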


\begin{proof}
(i) By Fact~\ref{r_mu-fact}, if $\mu$ is $fim$, then $r_\mu$ is also $fim$, and hence, by Theorem~\ref{main-irgs}, $\mu$ is $rgs$ (and in fact also $irgs$). The implication $rgs \Rightarrow fam$ is immediate from the definition, since $\mu$ is sequentially approximable by a sequence; see also Proposition~3.4 in \cite{Kyle-seq}.

(ii) The first implication follows from (i). For the second, if $r_\mu$ is $fim$, then it is self-averaging, as observed in Fact~\ref{self-fact}.

(iii) For types, the implication $irgs$ (indeed $rgs$) $\Rightarrow fim$ is stated in Proposition~\ref{type-case}. Conversely, self-averaging implies generic stability (equivalently $fim$), as follows from the definition; see also Corollary~2.12 in \cite{CGH}.
\end{proof}


The following example demonstrates that the difference between admitting a stable basic sequence and admitting an independent stable basic sequence is substantial. Indeed, many elements in the randomization admit a stable basic sequence, while admitting no independent stable basic sequence.
\begin{Example} \label{ex-rgs-not-irgs}
Let $n>1$ be a natural number, and let $a_1,\dots,a_n$ be elements of a model $M$. Let $\mu$ be their average measure, also denoted by $\mathrm{Av}(a_1,\dots,a_n)$. Then $\mu$ is a fim measure and hence, by Theorem~3.19 in \cite{CGH}, $r_\mu$ is also fim (i.e., generically stable).

Now consider a partition of $\Omega$ into measurable sets $B_1,\dots,B_n$, all of equal measure. Define
\[
f = \sum_{i=1}^{n} a_i^{B_i},
\]
and let $p_f$ denote the global type of $f$. Clearly, $p_f$ is a generically stable extension of $\mu$, but $p_f \neq r_\mu$. Indeed, one can easily see that $p_f$ does not satisfy the independence property (i.e., condition (iii) of Definition~\ref{isbs}).

For more clarity, suppose $f$ is of the form $a_1^{B_1} + a_2^{B_2}$ where $B_1, B_2$ form a partition of $\Omega$ into measurable sets, and $g$ is of the form $b_1^{E_1} + b_2^{E_2}$ where $E_1, E_2$ also form a partition of $\Omega$. Then 
\[
\mathbb{P}\llbracket \varphi(f;g) \rrbracket = \sum_{i,j \leq 2} \mu(B_i \cap E_j) \cdot \varphi(a_i;b_j)
\]
is in general not equal to 
\[
\sum_{i,j \leq 2} \mu(B_i) \cdot \mu(E_j) \cdot \varphi(a_i;b_j).
\]
Therefore, $f$ admits a stable basic sequence, while it does not admit any independent stable basic sequence.
In fact, the type of an element in $M_0^\Omega$ is of the form $r_\mu$ only if the range of that element is a singleton, i.e., $f = a^\Omega$ almost everywhere. (This is pointed out in Remark~3.24 of \cite{CGH}.)
Note that this does not imply that the average measure $\operatorname{Av}(a_1,a_2)$ is an $rgs$ measure which is not $irgs$, since $\operatorname{Av}(a_1,a_2)$ is $fim$, and therefore $irgs$ as well.
In fact, if an $rgs$ measure which is not $irgs$ exists, then constructing such an example does not appear to be easy, and we seem to be far from finding one. This naturally leads to the following question:
\begin{Question}
Does there exist an $rgs$ measure which is not $irgs$?
\end{Question}
\end{Example}

\section{Dependent Measures and $irgs$}
The concept of \textit{dependent} Keisler measure was   introduced in \cite{K-dependent}, based on the Fremlin–Talagrand notion of stability for families of functions with respect to a measure. The latter notion is of great importance in measure theory, one of its key features being the characterization of Glivenko–Cantelli classes of measurable functions.  At first glance, this notion may appear somewhat technical and unnatural from a model-theoretic perspective.  However, its natural properties and applications, partly presented in \cite{K-dependent}, together with the comparison of dependent measures with well‑behaved measures such as $fim$ carried out in the recent paper  highlight the importance of studying this notion further in model theory.
 We will prove that every $irgs$ measure is dependent, which improves the fact that every $fim$ is dependent (see Proposition~30 in \cite{K-dependent}).

We start by recalling the definition of a dependent measure.  First, we need to introduce
notation. Let  \(\varphi(x;y)\) be an \(L\)-formula.
For any $E \subseteq S_{\varphi}(\cal U)$ we write

\[
D_k(E,\varphi)=
\left\{
\bar p \in E^k :
\forall I \subseteq k\ \exists b \in {\cal U}\
\bigwedge_{i\in I}\varphi(p_i,b)=0
\ \text{and}\
\bigwedge_{i\notin I}\varphi(p_i,b)=1
\right\}.
\]

Recall that $\varphi(p_i,b)=1$ if $\varphi(x,b)\in p_i$ and
$\varphi(p_i,b)=0$ otherwise. For every formula $\varphi(x;y)$, the restriction map
\(
r_\varphi : S_x(\mathcal U) \to S_\varphi(\mathcal U)
\)
is a quotient map, where $S_\varphi(\mathcal U)$ denotes the space of complete
$\varphi$-types over $\mathcal U$. We set
\(
\mu_\varphi(E)=\mu(r_\varphi^{-1}(E))
\)
for any Borel subset $E\subseteq S_\varphi(\mathcal U)$.
\begin{Definition}[Dependent Measures]
{\em Let \(T\) be a complete theory, and
\(\mu_x$ a global measure. 
We say that $\mu$ is \emph{dependent}, if for any
formula $\varphi(x;y)$ there is no measurable
$E \subseteq S_{\varphi}({\cal U})$,
$\mu_{\varphi}(E)>0$
(where $\mu_{\varphi}$ is the restriction of $\mu$ to $S_{\varphi}({\cal U})$) 
such that for each $k$,
\[
(\mu_{\varphi}^k) D_k(E,\varphi)
=
(\mu_{\varphi}E)^k ,
\] where $\mu_{\varphi}^{k}$ is the usual product measure of $\mu_{\varphi}$. } 
\end{Definition}
\begin{Remark}
Further explanations are given in Definition~7 in \cite{K-dependent} and the remark following it. Note that our notation is simpler than there because we consider the sets $A,B$ there as the set of all parameters in $\mathcal{U}$, and since here we only work with global measures, we write $D_k(E,\varphi)$ instead of $D_k(A,B,E,\varphi)$.
Also in Remark~8(1) there we explained that the set $D_k(E,\varphi)$ is measurable.
\end{Remark}

The following fact provides potential examples of dependent measures, clarifying the place of this concept in the subject (or context). Parts (i)--(iii) below appear in Proportion~11 and Example~12 of~\cite{K-dependent}, and part (iv) is Proposition~30 therein.
\begin{Fact} \label{dependent-ex}
\begin{enumerate}
\item[(i)] Every measure in any NIP theory is dependent.
\item[(ii)] Every type in any arbitrary theory is dependent.
\item[(iii)] For any sequence $(p_i)$ of global types and any sequence of positive real numbers $(r_i)$ in $(0,1]$ with $\sum_i r_i = 1$, the measure $\sum_i r_i \cdot \delta_{p_i}$ is a dependent measure, where $\delta_{p_i}$ is the Dirac measure corresponding to $p_i$.
\item[(iv)] Every fim measure is dependent.
\end{enumerate}
\end{Fact}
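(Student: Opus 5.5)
Since the statement collects four separate assertions, I would prove them one at a time. The first three only need the combinatorics of $D_k(E,\varphi)$, so I would treat them directly. The last one needs a probabilistic (Glivenko--Cantelli type) argument, so I would route it through the Fremlin--Talagrand characterization of stability.

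For (i), fix $\varphi(x;y)$. In an NIP theory $\varphi$ has finite VC-dimension $d$, so no $k$-tuple of $\varphi$-types with $k>d$ can be shattered by the instances $\varphi(x;b)$. Hence $D_k(E,\varphi)=\emptyset$ for $k>d$. If $\mu_\varphi(E)>0$, then $\mu_\varphi^k(D_k(E,\varphi))=0<(\mu_\varphi E)^k$, so no witness $E$ exists.

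For (ii), $\mu_\varphi$ is the Dirac measure at the $\varphi$-type $q=r_\varphi(\mu)$. If $\mu_\varphi(E)>0$ then $q\in E$ and $\mu_\varphi(E)=1$. Moreover $\mu_\varphi^k=\delta_{(q,\dots,q)}$. For $k\ge 2$ the tuple $(q,\dots,q)$ is not in $D_k(E,\varphi)$, because taking $I=\{1\}$ would require $\varphi(q,b)=0$ and $\varphi(q,b)=1$ simultaneously. Thus $\mu_\varphi^2(D_2)=0\neq 1$.

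For (iii), the same idea works, using atoms instead of a single point. The pushforward $\mu_\varphi$ of $\sum_i r_i\delta_{p_i}$ is again purely atomic, namely $\sum_i r_i\delta_{r_\varphi(p_i)}$. Hence any Borel $E$ with $\mu_\varphi(E)>0$ contains an atom $q$ with $c=\mu_\varphi(\{q\})>0$. For $k\ge2$ the diagonal point $(q,\dots,q)$ lies in $E^k\setminus D_k(E,\varphi)$, by the argument in (ii). It carries $\mu_\varphi^k$-mass $c^k>0$, so $\mu_\varphi^k(D_k)\le(\mu_\varphi E)^k-c^k<(\mu_\varphi E)^k$.

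For (iv), I would argue by contradiction. Suppose $E$ with $\mu_\varphi(E)>0$ satisfies $\mu_\varphi^k(D_k(E,\varphi))=(\mu_\varphi E)^k$ for all $k$. Then for $\mu_\varphi$-almost every i.i.d. sample $(q_1,q_2,\dots)$, the subsample of points falling in $E$ (which has positive density by the strong law of large numbers) is shattered by the family $\{\varphi(x;b):b\in\mathcal U\}$ in every finite segment. By the Fremlin--Talagrand theorem, this instability is equivalent to the failure of uniform convergence of empirical averages. Concretely, with positive probability
\[
\sup_{b}\Bigl|\tfrac1n\textstyle\sum_{i\le n}\varphi(q_i,b)-\mu(\varphi(x;b))\Bigr|
\]
does not tend to $0$; in fact, on the shattered subsample one can choose $b$ to push the average on the $E$-part to $0$ or to $1$. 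On the other hand, $fim$ provides formulas $\theta_n(x_1,\dots,x_n)$ with $\mu^{(n)}(\theta_n)\to1$ such that every realization of $\theta_n$ has $\varphi$-averages uniformly $\varepsilon_n$-close to $\mu(\varphi(x;b))$ for all $b$.

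The main obstacle is that $fim$ speaks about the Morley power $\mu^{(n)}$, while $D_k$ is computed with the plain product $\mu_\varphi^n$ on $S_\varphi(\mathcal U)^n$. I would first try to show that, for a definable (indeed $fim$) measure, $\mu^{(n)}$ and $\mu_\varphi^{n}$ agree on the relevant sets. These are the sets $\{\bar q:\sup_b|\mathrm{Av}_i\varphi(q_i,b)-\mu(\varphi(x;b))|\le\varepsilon\}$, which are controlled by $\theta_n$. I would aim to obtain this by Fubini, using the Borel definability of $\mu$, so that the $fim$ estimate transfers to i.i.d. samples. If this transfer fails, the fallback is to use that $fim$ implies $fam$ and definability, and to apply Talagrand's criterion for stability directly to the uniformly approximable family $\{\varphi(x;b)\}$ in $L^1(\mu_\varphi)$.
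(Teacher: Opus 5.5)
Your arguments for (i)--(iii) are correct. In (i) the only implicit step is passing from shattered $\varphi$-types to shattered elements, which you get by realizing in $\mathcal U$ the finitely many relevant instances $\pm\varphi(x;b_I)$ lying in each $p_i$. In (ii)--(iii) the diagonal point over an atom is exactly the right obstruction.

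For (iv) the outline is sound, but the difficulty is not where you put it. The Morley-power versus product issue is not an obstacle. Since $\mu^{(n)}(\bigwedge_i\psi_i(x_i))=\prod_i\mu(\psi_i)$, the pushforward of $\mu^{(n)}$ to $S_\varphi(\mathcal U)^n$ is the (Radon) product $\mu_\varphi^n$; the paper uses exactly this, via Proposition~3.3 of \cite{GH}, in the proof of Theorem~\ref{irgs-dependent}. The image of $[\theta_n]$ is then a closed set of $\mu_\varphi^n$-measure at least $\mu^{(n)}(\theta_n)$, and all of its points are $\varepsilon_n$-approximations: for each $b$, realize $\theta_n(\bar x)\wedge\bigwedge_i\pm\varphi(x_i;b)$ in $\mathcal U$, with signs read off from $\bar q$. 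No Fubini or Borel definability is needed.

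The genuinely hard step is the one you present as concrete. A choice of $b$ controls $\varphi(q_i,b)$ only at sample points lying in $E$, and the points outside $E$ can cancel the effect. So forcing the $E$-part of the average to $0$ or $1$ does not by itself keep $\sup_b|\cdots|$ away from $0$. That implication is the content of Talagrand's theorem that a witness of irregularity destroys the Glivenko--Cantelli property. You need it in the form excluding convergence in (outer) probability, since $fim$ only gives that the good set has $\mu_\varphi^n$-measure tending to $1$. So (iv) is complete modulo that citation.

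Your fallback would not work. $rgs$ measures are definable and $fam$ (Corollary~\ref{comparing}), yet the paper leaves open whether they are dependent, so $fam$ plus definability is not known to give dependence.

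The paper gives no proof of this Fact; it cites \cite{K-dependent}. The argument it gives for the generalization, Theorem~\ref{irgs-dependent}, goes differently. It first uses Fact~\ref{Talagrand} to replace $E$ by all of $S_\varphi(\mathcal U)$, so that $\mu^{(k)}(\Phi_k)=1$ for every $k$. The contradiction is then purely combinatorial. For $fim$: a realization of $\theta_n(x_1,\dots,x_n)\wedge\theta_n(x_{n+1},\dots,x_{2n})\wedge\Phi_{2n}$ is a shattered tuple both of whose halves are $\varepsilon$-approximations, and the pattern $0$ on one half and $1$ on the other is impossible for $\varepsilon<\frac12$.

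Your route keeps an arbitrary witness set $E$ and pays with the full Glivenko--Cantelli theorem, but it is the more robust one. The reduction to $E=S_\varphi(\mathcal U)$ fails once $\mu$ has an atom:
\begin{itemize}
\item[(a)] Let $\nu$ be non-dependent, e.g.\ the coin-flipping measure on the random graph. By your argument in (iii), $\nu_\varphi$ has no atoms on a witness set.
\item[(b)] Removing the point $r_\varphi(p)$ from that set therefore gives a witness for $\mu=\frac12\delta_p+\frac12\nu$, so $\mu$ is not dependent.
\item[(c)] Yet for every $\psi$ the diagonal point $(r_\psi(p),r_\psi(p))$ has $\mu_\psi^2$-mass at least $\frac14$ and lies outside $D_2(S_\psi(\mathcal U),\psi)$, so $\mu_\psi^2(D_2)\le\frac34$ for every $\psi$.
\end{itemize}
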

The main theorem of this section (namely Theorem~\ref{irgs-dependent}) is a generalization of Fact~\ref{dependent-ex}(iv) above.

\medskip
The following fact, which was not addressed in \cite{K-dependent}, is of great importance for the proof of the main theorem of this section.
\begin{Fact} \label{Talagrand}
If $\mu$ is not dependent, then there exists a formula $\varphi(x;y)$ such that for every $k$ we have $(\mu_{\varphi}^k) D_k(S_\varphi({\cal U}),\varphi)=1$.
\end{Fact}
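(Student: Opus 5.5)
The plan is to use the zero–one law of Talagrand for stable families of functions, which is the dichotomy behind Fremlin–Talagrand stability. Fix a formula $\varphi(x;y)$ witnessing that $\mu$ is not dependent. By definition there is a measurable $E\subseteq S_\varphi(\mathcal U)$ with $\mu_\varphi(E)>0$ and $(\mu_\varphi^k)D_k(E,\varphi)=(\mu_\varphi E)^k$ for all $k$.

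The first step is to translate this into the language of Fremlin–Talagrand. Consider the family of continuous functions
\[
A=\{\varphi(\cdot,b): b\in\mathcal U\}\subseteq C(S_\varphi(\mathcal U)).
\]
The set $D_k(E,\varphi)$ is exactly the set of $k$-tuples in $E^k$ that are shattered by $A$. So non-dependence says that $\mu_\varphi^k$-almost every $k$-tuple from $E$ is shattered, for every $k$. In other words, $A$ fails to be $\mu_\varphi$-stable, witnessed on $E$.

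The second step, which is the heart of the matter, is to show that the witnessing set can be enlarged to full measure. The natural candidate is the \emph{largest} (up to null sets) measurable $E_0$ on which almost all tuples are shattered, that is, the essential supremum of all such witnesses. This is well defined by a countable exhaustion argument, because a countable union of witnesses is again a witness: almost every tuple from the union has its coordinates lying in finitely many of the witnesses, and the shattering condition on mixed tuples is handled by Talagrand's characterization ``almost all tuples shattered'' $\Leftrightarrow$ a condition on the upper integrals of the sets $\bigcap_{i\in I}\{\varphi(\cdot,b)=0\}\cap\bigcap_{i\notin I}\{\varphi(\cdot,b)=1\}$.

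Next I would show $\mu_\varphi(E_0)=1$. Here I would use invariance: $\mu$, and hence $\mu_\varphi$, is invariant under $\mathrm{Aut}(\mathcal U/M)$ (definability over $M$ is implicit in the setting), and the shattering property is preserved by automorphisms. Therefore $E_0$ is invariant modulo null sets. Then, following Talagrand's zero–one law (Theorem 9-4-2 in his memoir on Pettis integrals: for a stable-failing family, the set of points where stability fails is determined by a measure-theoretic closure), the complement $S_\varphi(\mathcal U)\setminus E_0$ carries a measure on which $A$ is stable. I would argue that if this complement had positive measure, then $\mu_\varphi$ would split as a convex combination of a stable part and a part that is shattered almost everywhere.

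I expect this splitting to be the main obstacle. The concern is that $\mu_\varphi$ conditioned to the complement might still admit shattered sets of positive measure, which would contradict maximality only if shattering on a subset of the complement is detected by the full measure. This is in fact automatic, since $D_k(F,\varphi)\subseteq D_k(S_\varphi,\varphi)$ and the product measures restrict. So the obstacle reduces to ensuring that the maximal witness has full measure rather than merely being maximal. If the invariance argument above does not suffice for this, the fallback is to \emph{replace} $\varphi$ by a Boolean combination of instances of $\varphi$, which is still a formula $\psi(x;y')$, and to use compactness to localize to $E_0$. Once $\mu_\varphi(E_0)=1$ is established, we obtain $(\mu_\varphi^k)D_k(S_\varphi(\mathcal U),\varphi)\ge(\mu_\varphi^k)D_k(E_0,\varphi)=1$ for every $k$, as claimed.
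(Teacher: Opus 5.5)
\textbf{There is a genuine gap, and it cannot be filled.} The failing step is the one you single out yourself: passing from a maximal witness $E_0$ to $\mu_\varphi(E_0)=1$. No argument can supply it, because the statement as written is false.

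\textbf{Counterexample.} Let $T$ be the theory of the random graph. Let $\nu$ be the generic Keisler measure: it concentrates on $x$ distinct from all parameters, and the events $R(x;b)$, $b\in\mathcal U$, are independent of probability $\frac12$. Let $p=\{x\neq a\wedge\neg R(x;a):a\in\mathcal U\}$ and put $\mu=\frac12\delta_p+\frac12\nu$. This $\mu$ is $\emptyset$-definable and $\mathrm{Aut}(\mathcal U)$-invariant.

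\emph{$\mu$ is not dependent.} Take $\varphi=R(x;y)$. Then $\nu_\varphi(\{p|_\varphi\})=0$. Moreover $\nu_\varphi^k$-almost every $k$-tuple is shattered, since the probability that none of $N$ distinct parameters realizes a given pattern is $(1-2^{-k})^N$. So $E=S_\varphi(\mathcal U)\setminus\{p|_\varphi\}$ satisfies $\mu_\varphi(E)=\frac12$ and $(\mu_\varphi^k)D_k(E,\varphi)=2^{-k}=(\mu_\varphi E)^k$.

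\emph{The conclusion fails for every formula.} For any formula $\psi(x;y)$ we have $\mu_\psi(\{p|_\psi\})\ge\frac12$. No $k$-tuple ($k\ge2$) with two coordinates equal to $p|_\psi$ is shattered. Hence $(\mu_\psi^k)D_k(S_\psi(\mathcal U),\psi)\le\frac34$. Indeed, the 0--1 law of Fact~\ref{0-1} then gives $\mathbb P_\mu(\mathbf I^\psi)=0$ for every $\psi$.

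\emph{Why your tools cannot exclude this.} This is exactly the ``splitting'' you were worried about: $\mu_\varphi$ can be a genuine mixture of a dependent part and an almost-everywhere-shattered part. Automorphism invariance cannot exclude it, since there is no ergodicity on $S_\varphi(\mathcal U)$. The available 0--1 laws concern exchangeable events of the Morley sequence, not subsets of the type space. Your fallback of changing the formula fails too, since the atom at $p$ survives every restriction map $r_\psi$.

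\textbf{The exhaustion step is also unsound as stated.} Witnesses are closed under passing to positive-measure subsets, since $D_k(F,\varphi)=D_k(E,\varphi)\cap F^k$ for $F\subseteq E$. They are not closed under unions, because mixed tuples $(q_1,q_2)\in E_1\times E_2$ need not be shattered. One can arrange that $\varphi(q_2;b)=1$ forces $\varphi(q_1;b)=1$ for all $b$, while the shattering inside $E_1$ and inside $E_2$ is realized by disjoint sets of parameters. So your ``largest witness'' $E_0$ need not exist. What is closed under countable unions is the family of sets containing \emph{no} witness, and its complement is in general not a witness.

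\textbf{Consequence for the paper's argument.} The same example shows that the paper's one-line appeal to Talagrand's Theorem~16(c) cannot deliver the global conclusion either. After the two-valued reduction, the most one can extract is the localized statement $(\mu_E^k)D_k(S_\varphi(\mathcal U),\varphi)=1$ for the normalized restriction $\mu_E=\mu_\varphi(\cdot\cap E)/\mu_\varphi(E)$. That is merely a reformulation of non-dependence. Wherever the Fact is used (e.g.\ to obtain $\mu^{(k)}(\Phi_k)=1$), one only has $\mu^{(k)}(\Phi_k)\ge\mu_\varphi(E)^k$, and the downstream argument must be reworked with this localization.
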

\begin{proof}
This follows from Theorem~16(c)  of   \cite{Talagrand}.
The key point is that since the functions we are working with (i.e., formulas) are two-valued, for the functions $u, v$ in Theorem~16 of Talagrand's paper, $f(s_i) < u(s_i)$ implies $f(s_i)=0$, and similarly $f(t_i) > v(t_i)$ implies $f(t_i)=1$. Note that here $f$ is a formula taking only the values $0$ and $1$. Also, see Proposition~4 and its proof on page~843 of \cite{Talagrand}.
\end{proof}



The only thing that remains and which we now recall is the notion of uniform $NIP$ for a set of parameters.  Suppose $A$ is a small set of parameters of sort $x$ in the monster model.   We say that $A$ is uniformly $NIP$ for the formula $\varphi(x;y)$ if for every $r<s$ there exists a number $n=n_{r,s}$ such that there is no sequence of length greater than $n$ in $A$ that is shattered    by  the formula $\varphi(x;y)$. That is, for every $m>n$ and every sequence $f_1,\dots,f_m$ in $A$, there exists a subset $I\subseteq m$ such that the following formula does \emph{not} hold:
\[
\exists g_I\in\mathcal{C}\; \bigwedge_{i\in I}\varphi(f_i;g_I)\wedge\bigwedge_{i\notin I}\neg\varphi(f_i;g_I).
\]
A set \(A\) is called uniformly \(NIP\) if it is uniformly \(NIP$ for every formula.

\medskip
We are now ready for the main theorem of this section.
\begin{Theorem}\label{irgs-dependent}
\begin{enumerate}
    \item[(i)] Assume that $\mu$ is a definable measure which is not dependent. Then     some/any  Morley sequence  in  $r_\mu$ is divergent.

    \item[(ii)] Every $irgs$ measure is dependent.
\end{enumerate}
\end{Theorem}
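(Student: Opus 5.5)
The plan is to prove (i) by exhibiting a Morley sequence in $r_\mu$ that is shattered by a single formula, and then to derive (ii) from (i).

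\textbf{Step 1: a formula shattered with probability one.} Assume $\mu$ is definable over $M$ and not dependent. By Fact~\ref{Talagrand} there is a formula $\varphi(x;y)$ with $(\mu_\varphi^k)\,D_k(S_\varphi(\mathcal U),\varphi)=1$ for every $k$. In words: if $p_1,\dots,p_k$ are chosen independently according to $\mu_\varphi$, then almost surely every subset pattern $I\subseteq k$ is realized by some $b_I\in\mathcal U$.

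\textbf{Step 2: transfer to the randomization.} Let $(f_i)_{i<\omega}$ be a Morley sequence in $r_\mu$ over $M^\Omega$; its finite subsequences realize $(r_\mu)^{(k)}=r_{\mu^{(k)}}$ by Fact~\ref{r_mu-fact}(ii). Heuristically, at a generic point $\omega\in\Omega$ the values $f_1(\omega),\dots,f_k(\omega)$ behave like an independent $\mu$-sample. Step 1 then says that almost surely every pattern $I\subseteq k$ is realized pointwise by some $b_I$. The goal is to glue these pointwise witnesses into a single random parameter $g_I\in\mathcal C$ with
\[
\mathbb P\llbracket \varphi(f_i;g_I)\rrbracket=1 \ (i\in I),\qquad \mathbb P\llbracket \varphi(f_i;g_I)\rrbracket=0 \ (i\notin I).
\]
In $T^R$ this amounts to showing that the conditions
\[
\mathbb P\llbracket \exists y\,(\textstyle\bigwedge_{i\in I}\varphi(x_i;y)\wedge\bigwedge_{i\notin I}\neg\varphi(x_i;y))\rrbracket=1
\]
lie in $r_{\mu^{(k)}}$. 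This probability equals $\mu^{(k)}$ of the corresponding classical formula. That value is at least $(\mu_\varphi^k)D_k(S_\varphi(\mathcal U),\varphi)=1$, once we check that $\mu^{(k)}$ restricted to $\varphi$-types agrees with the product $\mu_\varphi^k$. Given this, full randomization models have witnesses for existential events (the fullness property of $T^R$), which produces the required $g_I$.

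\textbf{Step 3: divergence.} The sequence $(f_i)$ is therefore shattered by $\varphi$ with values $0$ and $1$ on every finite subset, so $\{f_i\}$ is not uniformly NIP. By indiscernibility and compactness one obtains $g$ alternating on even and odd indices, so $\mathbb P\llbracket\varphi(f_i;g)\rrbracket$ does not converge, and Morley sequences in $r_\mu$ are divergent. Since all Morley sequences over $M^\Omega$ share the same type, ``some'' and ``any'' coincide.

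\textbf{Step 4: part (ii).} If $\mu$ is $irgs$, then $r_\mu$ is generically stable, so by Fact~\ref{generic-fact-1}(ii) its Morley sequences converge. By (i), $\mu$ must then be dependent.

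\textbf{Main obstacle.} The hardest step is the identification in Step 2: showing that $\mu^{(k)}$ restricted to the $\varphi$-shattering event agrees with the independent product $\mu_\varphi^k$. The Morley product of a non-NIP measure need not be a genuine product on all Borel sets, but for definable $\mu$ it should reduce to a Fubini computation over $S_\varphi$. I would first try to handle this via definability, integrating the definable function $q\mapsto\mu(\psi(x;q))$ and using the measurability of $D_k$ from Remark~8 of \cite{K-dependent}.
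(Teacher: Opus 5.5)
Your proposal is correct and follows essentially the paper's route: Fact~\ref{Talagrand}, then $\mu^{(k)}(\Phi_k)=1$ transferred to $(r_\mu)^{(k)}=r_{\mu^{(k)}}$, fullness/perfect witnesses in $T^R$ to produce the shattering parameters $g_I$ for a Morley sequence, and (ii) from the convergence of Morley sequences of the generically stable type $r_\mu$. The identification you flag as the main obstacle is exactly where the paper cites Proposition~3.3 of \cite{GH}: the pushforward of $\mu^{(k)}$ to $(S_x(\mathcal U))^k$ is the product $\mu^k$, which holds because the two measures agree on clopen boxes. Your inequality $\mu^{(k)}(\Phi_k)\ge\mu_\varphi^k(D_k)$ is precisely what that step requires.
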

\begin{proof}
(i):  Since $\mu$ is not dependent, by Fact~\ref{Talagrand}, there exists a formula $\varphi(x;y)$ such that for every $k$, $\mu^k(D_k) = 1$, where $D_k$ is the set
\[
\bigl\{ \bar p \in S_{\varphi}(\mathcal{U}) : \forall I \subseteq k \; \exists b_I \in \mathcal{U} \; \bigwedge_{i \in I} \varphi(p_i; b_I) \wedge \bigwedge_{i \notin I} \neg \varphi(p_i; b_I) \bigr\}.
\]


Consider the following formula:
\[
\Phi_k(x_1,\dots,x_k) := \forall I \subseteq k \; \exists y_I \; \bigwedge_{i \in I} \varphi(x_i; y_I) \wedge \bigwedge_{i \notin I} \neg \varphi(x_i; y_I).
\]
(Without causing ambiguity, we drop the index \(k$ and write only $\Phi$. Thus, we mean for every arbitrary $k$.)

Therefore we have  $\mu^{(k)}(\Phi) = \mu^k(D_k) =1$.
The last statement is a consequence of Proposition~3.3  in   \cite{GH}. To see this, note that the map $r: S_{x_1\ldots x_k}(\mathcal{U}) \to (S_x(\mathcal{U}))^k$ is a quotient map, and by the mentioned proposition, the pushforward measure $r_*(\mu^{(k)})$ on $(S_x(\mathcal{U}))^k$ coincides with the product measure $\mu^k$. Also, the restriction map $r_\varphi: S_x(\mathcal{U}) \to S_\varphi(\mathcal{U})$ is a quotient map. Combining these observations yields the desired result that $\mu^{(k)}(\Phi)=1$.

Recall from Corollary 3.16 in \cite{CGH} that 
$(r_\mu)^{(k)}(\bar x)=r_{\mu^{(k)}}(\bar x)$.
Therefore, $(r_\mu)^{(k)}(\mathbb{P}\llbracket \Phi(\bar x) \rrbracket) 
=\mu^{(k)}(\Phi)=1$.

In what follows, we aim to show that there exists a witness for the shattering of Morley sequences in $r_\mu$.
Consider a Morley sequence $(f_i)_{i<\omega}$ in $r_\mu$ (over ${\cal U}^\Omega$).
Therefore, for every \(k\), we have
\(
\Bbb P\llbracket\Phi(f_1,\ldots,f_k)\rrbracket= 1
\).
 For each $I\subset k$, using the {\rm Fullness Axioms} in the randomization (see \cite{BK}), we have:
$$\exists y_I \Big[
\mathbb{P}\llbracket
\Phi_I(\bar{f}; y_I)
\rrbracket
\doteq
\mathbb{P}\llbracket
\exists y_I\, \Phi_I(\bar{f}; y_I)
\rrbracket
\Big],$$
where 
$\Phi_I(\bar{f}; y_I) \;:=\;
\bigwedge_{i \in I} \varphi(f_i; y_I)\ \wedge\ \bigwedge_{i \notin I} \neg \varphi(f_i; y_I)$, and
 the symbol $\doteq$  denotes equality in the Boolean algebra sort; namely,
\(\mathbf{U} = \bf V \)
if and only if
\({\Bbb P}({\bf U} \triangle {\bf V})=0\). (See Distance Axioms in \cite{BK}.)
Note that, by Theorem 2.7 of \cite{BK}, every (pre-)model of $T^R$ has {\em perfect witnesses} in the sense of Definition~2.4 of that paper.
 Therefore,  using the perfect witnesses property and that
\[
\mathbb{P}\llbracket \exists y_I\, \Phi_I(\bar{f}; y_I)\rrbracket = 1,
\]
we conclude that, for each $I \subseteq k$ we can find an element $g_I$ such that \( \mathbb{P}\llbracket \Phi_I(\bar{f}; g_I)\rrbracket = 1
\).\footnote{Recall that the existential quantifier $\exists$ in continuous logic is interpreted as an infimum; hence it guarantees the existence of witnesses that are arbitrarily close to satisfying the equality. Although for the purposes of this proof such approximate witnesses are already sufficient, having perfect witnesses ensures the existence of an element that is not merely approximate but satisfies the formula exactly (in a way analogous to the classical case).}
Therefore, we have 

\[
\mathbb{P}\llbracket \exists y_I\, \Phi_I(\bar{f}; y_I)\rrbracket = 1
\;\overset{(*)}{\Rightarrow}\;
\left(
\bigwedge_{i \in I} \mathbb{P}\llbracket \varphi(f_i; g_I)\rrbracket = 1
\;\wedge\;
\bigwedge_{i \notin I} \mathbb{P}\llbracket  \varphi(f_i; g_I)\rrbracket = 0
\right).
\]
Observe that the above direction $(*)$ relies on the Boolean Axioms (see \cite{BK}), which assert that for any formulas $\psi_1$ and $\psi_2$,
\[
\llbracket (\psi_1 \wedge \psi_2)(\bar x)\rrbracket
\doteq
\llbracket \psi_1(\bar x)\rrbracket
\sqcap
\llbracket \psi_2(\bar x)\rrbracket \text{ and } \llbracket \neg\psi_1(\bar x)\rrbracket=\neg\llbracket \psi_1(\bar x)\rrbracket
\]
as well as on the elementary fact that for events $\mathbf U,\mathbf V$,
\[
{\Bbb P}(\mathbf U\sqcap \mathbf V)=1
\quad\Rightarrow\quad
{\Bbb P}(\mathbf U)={\Bbb P}(\mathbf V)=1.
\]
Since $I$ is an arbitrary subset of $\{1,\dots,k\}$, the above shows that the  Morley sequence $(f_i)$  in  $r_\mu$ is shattered for the formula $\mathbb{P}\llbracket \varphi(x; y)\rrbracket$. This is sufficient to prove the divergence of the Morley sequence.

 (ii): This follows from part (i) together with Fact~\ref{key-fact}.
 Since  $r_\mu$ is generically stable, its Morley sequences are convergent. Consequently, by the previous part, it must be dependent.
 
\textbf{Alternative Argument for (ii):} The idea of the proof is similar to Proposition~29 in \cite{K-dependent}, and we also use the fact that \( r_{\mu(x)\otimes \mu(y)} = r_{\mu(x)}\otimes r_{\mu(y)} \).
 Suppose, toward a contradiction, that $\mu$ is $irgs$ over $M$ but not dependent.

Since $\mu$ is not dependent, by Fact~\ref{Talagrand}, analogously to the argument in part (i),  there exists a formula $\varphi(x;y)$ such that for every $k$,
$\mu^{(k)}(\Phi_k) = 1$, 
where  $\Phi_k$ is the formula introduced earlier in part~(i).
(We  write only $\Phi$. Thus, we mean for every arbitrary $k$.)

On the other hand, since $\mu$ is $irgs$, by Theorem~\ref{main-irgs} there exists a sequence $(f_i)$ of elements in $M^\Omega$ that satisfies condition (ii) of Definition~\ref{sbs}, and $\operatorname{tp}(f_i / \mathcal{C}) \to r_{\mu}$.
That is, for every formula $\mathbb{P}\llbracket \varphi(x;y) \rrbracket$ and every $\varepsilon>0$ there exists a number $n_{\varphi,\varepsilon}$ such that
\[
\bigl| \bigl\{ i : \bigl| \mathbb{P}\llbracket \varphi(f_i;g) \rrbracket - \mathbb{P}^{r_\mu}\llbracket \varphi(x;g) \rrbracket \bigr| > \varepsilon \bigr\} \bigr| < n_{\varphi,\varepsilon}
\]
for every $g$ in the monster model $\mathcal{C}$.
The above property implies that the set
\(
A:=\{f_i:i<\omega\}
\)
is uniformly NIP.

On the other hand, since $r_\mu$ is $fam$ over $A$, clearly $(r_\mu)^{(k)}$ is also finitely satisfiable in $A$.

Similarly to the discussion in part (i), one can see that $$\exists y_I \left(
\bigwedge_{i \in I} \mathbb{P}^{(r_\mu)^{(k)}}\llbracket \varphi(x_i; y_I)\rrbracket = 1
\ \wedge\
\bigwedge_{i \notin I} \mathbb{P}^{(r_\mu)^{(k)}}\llbracket \neg \varphi(x_i; y_I)\rrbracket = 1
\right).$$

Since $(r_\mu)^{(k)}(\mathbb{P}\llbracket \Phi(\bar x) \rrbracket) 
=\mu^{(k)}(\Phi)=1$ and $(r_\mu)^{(k)}$ is finitely satisfiable in $A$, for each $\varepsilon$, there exist elements $f_1',\dots,f_k'$ in $A$ such that for every $I\subseteq k$ the following holds

$$\exists y_I \left(
\bigwedge_{i \in I} \mathbb{P}\llbracket \varphi(f_i'; y_I)\rrbracket \geq 1-\varepsilon
\ \wedge\
\bigwedge_{i \notin I} \mathbb{P}\llbracket  \varphi(f_i'; y_I)\rrbracket \leq \varepsilon
\right).\footnote{Note that finite satisfiability in continuous logic is approximate, and for this reason we have fixed a positive $\varepsilon$.}$$

 Moreover, since $k$ can be taken arbitrarily large, we obtain a contradiction with the uniform NIP property of $A$.
\end{proof}



\begin{Remark}

 If in the assumptions of Theorem~\ref{irgs-dependent} we weaken the hypothesis that $\mu$ is $irgs$ to the assumption that $\mu$ is $rgs$, and additionally assume
$(p_\mu)^{(k)}(\mathbb{P}\llbracket \Phi(\bar x) \rrbracket) 
>\mu^{(k)}(\Phi)-\frac{1}{2}$
 that $p_\mu$ is the corresponding extending random type of $\mu$, then the proof of the theorem goes through without difficulty.
 On the other hand, in Theorem~4.6 of \cite{G-transfer}, a related result has been proved under the assumption of $NIP$. Namely, an equivalent condition for Morley products to be preserved under restriction. One can easily show that the $NIP$ assumption can be replaced by the weaker assumption that the right-hand measure is dependent, because the only place where $NIP$ is used is in the application of Remark~1.5 in that paper, which in fact also holds for dependent measures.  Despite these, the answer to the first question remains open, and we ask it separately:
\begin{Question} Is every $rgs$ measure dependent?
\end{Question}
\end{Remark}


\begin{Corollary} \label{cor}
Let $\mu$ be a global $irgs$ measure over $M$, 
and $\lambda$ a global measure which is $M$-finitely satisfied. Then
\begin{enumerate}
\item[(i)] the Morley product $\lambda \otimes \mu$ is well-defined, and $\mu \otimes \lambda = \lambda \otimes \mu$.
\item[(ii)] $\mu$ is symmetric; that is, for every permutation $\sigma$ of $\{1,\dots,n\}$,
\[
\mu_{x_1} \otimes \cdots \otimes \mu_{x_n} = \mu_{\sigma(x_1)} \otimes \cdots \otimes \mu_{\sigma(x_n)}.
\]
\end{enumerate}
\end{Corollary}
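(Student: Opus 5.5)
The plan is to combine Theorem~\ref{irgs-dependent} with the symmetry results for dependent measures from \cite{K-dependent}, so most of the work is bookkeeping about hypotheses. First, by Theorem~\ref{irgs-dependent}(ii), the $irgs$ measure $\mu$ is dependent. By the Fact following Definition~\ref{isbs}, an $irgs$ measure admits an (independent) stable basic sequence over $M$. Hence $\mu$ is definable over $M$ and finitely satisfiable in $M$. By Corollary~\ref{comparing}(i) it is also $fam$ over $M$, so $\mu$ is in particular Borel definable over $M$.

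For part~(i), the product $\lambda\otimes\mu$ is well-defined once the fiber function $q\mapsto \mu(\varphi(x;b))$ with $b\models q$ is Borel on $S_y(M)$. This holds because $\mu$ is definable, so the map is even continuous. The product $\mu\otimes\lambda$ is well-defined because $\lambda$, being finitely satisfiable in a small model, is Borel definable; in any case we integrate the continuous function coming from the definability of $\mu$ on the other side. For commutation I would invoke the result of \cite{K-dependent} that a definable dependent measure commutes with every $M$-finitely satisfiable measure. The underlying argument runs as follows. Write $\lambda$ as a limit of averages $\mathrm{Av}(\bar c_\alpha)$ with $\bar c_\alpha$ from $M$. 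Dependence of $\mu$ means that the family $\{\varphi(x;b): b\}$ is Talagrand-stable with respect to $\mu_\varphi$. That stability lets us exchange the limit over $\alpha$ with integration against $\mu$: the pointwise limits are $\mu$-measurable and the convergence is in $L^1(\mu)$, since Talagrand stability gives Glivenko--Cantelli behaviour. For averages of realized types commutation is trivial: both sides equal $\frac1n\sum_i \mu(\varphi(x;c_i))$. Passing to the limit on both sides then yields $\lambda\otimes\mu=\mu\otimes\lambda$.

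For part~(ii), I apply part~(i) with $\lambda=\mu$, which is allowed because $\mu$ itself is $M$-finitely satisfiable. This gives $\mu_{x}\otimes\mu_{y}=\mu_y\otimes\mu_x$. To go to $n$ factors, I would show that $\mu^{(k)}$ is again $M$-finitely satisfiable, which follows from $\mu$ being $M$-finitely satisfiable and definable. Then part~(i) with $\lambda=\mu^{(k)}$ lets $\mu$ commute past a block. Together with associativity of the Morley product for definable, finitely satisfiable measures, this generates all transpositions, hence all permutations. As an alternative route for (ii), Fact~\ref{r_mu-fact}(ii) gives $(r_\mu)^{(n)}=r_{\mu^{(n)}}$. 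Since $r_\mu$ is generically stable, its Morley sequences are totally indiscernible, so $(r_\mu)^{(n)}$ is symmetric. Restricting to constant parameters $b^\Omega$ then gives symmetry of $\mu^{(n)}$. I would present this as a check.

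The main obstacle is the limit-exchange step in (i): showing that dependence of $\mu$ alone justifies $\int \lim_\alpha \mathrm{Av}(\bar c_\alpha)(\varphi(c;y))\,d\mu = \lim_\alpha\int\cdots$ when the limit is along a net. I would first try to replace the net by a sequence, using that $\mu$ is $fam$ or that the relevant $\varphi$-types live over a countable submodel. Then I would apply Talagrand's theorem that stable families are closed under pointwise sequential limits as $\mu$-measurable functions with convergence in measure, together with dominated convergence.
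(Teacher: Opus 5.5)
Your overall route is the paper's. Theorem~\ref{irgs-dependent}(ii) makes $\mu$ dependent, (i) is then the result on dependent measures in \cite{K-dependent} (the paper cites Proposition~16 and Theorem~21 there), and (ii) is (i) with $\lambda=\mu$. However, your own justification of the well-definedness claim in (i) uses a false step. Outside NIP, an $M$-finitely satisfiable measure need not be Borel definable. For instance, take the random graph with $M$ countable and the coheir of a nonprincipal ultrafilter $\mathcal D$ on $M$. Its fiber map for $R(x,y)$, on the closed set of non-realized types in $S_y(M)$ (a copy of $2^M$), is the characteristic function of $\mathcal D$, so it is not Borel. Whatever convention you use for $\otimes$, one of the two products is $\int F_\lambda\,d\mu$, where $F_\lambda(\operatorname{tp}(a/M))=\lambda(\varphi(a;y))$. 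Under the usual convention, where the left factor supplies the fiber map, this is exactly the product $\lambda\otimes\mu$ whose well-definedness the statement asserts, and it is here that dependence of $\mu$ is needed. Your remark about integrating ``the continuous function coming from the definability of $\mu$ on the other side'' does not apply to this product. The repair is the ingredient you already invoke for commutation. $F_\lambda$ is the pointwise limit of a net of averages $\frac1n\sum_i\varphi(x;c_i)$ with $c_i\in M$. Dependence says $\{\varphi(x;b)\}$ is stable for $\mu$, and stability passes to convex hulls and pointwise closures (Talagrand). Hence $F_\lambda$ is $\mu$-measurable, which is what Proposition~16 of \cite{K-dependent} provides.

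The reduction from nets to sequences that you propose for the limit exchange also fails. That $\mu$ is $fam$ says nothing about $\lambda$. In the example above, $\lambda$ is not a sequential limit of averages from $M$ even though $M$ is countable, since such a limit would have a Baire class~1, hence Borel, fiber map. No reduction is needed. On the pointwise closure of a uniformly bounded stable family, pointwise convergence of nets implies convergence in $\mu$-measure (Talagrand), and uniform boundedness then gives convergence of the integrals. With these two corrections your proposal matches the paper. Your bookkeeping for (ii) is fine. Your ``check'' via $r_\mu$ is a genuinely different valid argument. Generic stability of $r_\mu$ makes its Morley sequences totally indiscernible, so $(r_\mu)^{(n)}=r_{\mu^{(n)}}$ is symmetric, and restricting to constant parameters $b^\Omega$ gives symmetry of $\mu^{(n)}$ without using dependence at all. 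It says nothing, however, about an arbitrary finitely satisfiable $\lambda$.
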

\begin{proof}
(i): This follows from Theorem~\ref{irgs-dependent} above  and Proposition~16 (and the definition following it) and Theorem~21 in \cite{K-dependent}.

(ii): This follows from (i). See also Corollary~24 in \cite{K-dependent}.
\end{proof}


\section{Model Theoretic  Events and $irgs$}
In this section, we study a notion that is inherently related to the previous discussion, but from a different conceptual perspective, as investigated in \cite{GH}. 
In that work, the Morley measure $\mu^{(\omega)}$ associated with a measure $\mu$ (for which Morley products are well-defined) is studied. This measure is denoted by ${\Bbb P}_{\mu}$.
In Section 6 of that paper, model-theoretic events corresponding to the properties OP, IP, and SOP in the dividing patterns of the theory are introduced. Since these events satisfy a 0--1 law with respect to ${\Bbb P}_{\mu}$, it is shown in Theorem~6.7 of \cite{GH} that if $\mu$ is fim, then the measure of the model-theoretic non-tameness  events under ${\Bbb P}_{\mu}$ is zero.
In this section, we generalize this result and show that an analogous statement also holds for $irgs$ measures.

The notation of this section follows that of \cite{GH} to facilitate comparison and adaptation; therefore, instead of $\mu^{(\omega)}$ and $\Phi_k$ used in previous sections, we use ${\Bbb P}_\mu$ and $I_k^{\varphi}$.
Also, some of the technique and definitions have been omitted; therefore, for a more detailed definition and additional explanations, refer to the original reference.

In what follows, for each formula $\varphi(\bar x)$, by $[\varphi(\bar x)]$ we mean the corresponding event in $S_{\mathbf{x}}(\mathcal{U})$, where $\mathbf{x}$ is a countable sequence of variables $x_1, x_2, \ldots$.
\begin{Definition}[\cite{GH}, Def.~6.3]
We define the following model-theoretic events:
\begin{itemize}
\item $\mathrm{\bf O}^\varphi=\bigcap_{k=1}^\infty[O_k^\varphi(x_1,\ldots,x_k)]$:  where $O_k^\varphi(\bar x)$ expresses that there exists an order of length $k$ for $\varphi(x;y)$;
\item $\mathrm{\bf I}^\varphi=\bigcap_{k=1}^\infty[I_k^\varphi(x_1,\ldots,x_k)]$:  where $I_k^\varphi(\bar x)$ expresses that there exists a shattering of length $k$ for $\varphi(x;y)$;
\item $\mathrm{\bf L}^\varphi=\bigcap_{k=1}^\infty[L_k^\varphi(x_1,\ldots,x_k)]$:  where $L_k^\varphi(\bar x)=\forall   y(\varphi(x_1;y)\subsetneq\ldots\subsetneq\varphi(x_k;y))$. 
\end{itemize}
We also define the global instability events by taking the union over all formulas in the language. More precisely, let
\[
\mathrm{\bf O} := \bigcup_{\varphi\in L} \mathrm{\bf O}^\varphi,\quad
\mathrm{\bf I} := \bigcup_{\varphi\in L} \mathrm{\bf I}^\varphi,\quad
\mathrm{\bf L} := \bigcup_{\varphi\in L} \mathrm{\bf L}^\varphi.
\]
\end{Definition}

\begin{Remark} 
  Note that the event ${\bf I}^\varphi$ above is precisely equal to $\bigcap_{k=1}^\infty  [\Phi_k]$, where $\Phi_k$ is the formula associated to $\varphi$ introduced in the proof of Theorem~\ref{irgs-dependent}.
\end{Remark}

\begin{Fact}[\cite{GH}, Lemma~6.5] \label{0-1}
 $\mathbb{P}_\mu$ satisfies a 0--1 law on such tail events. That is, 
 $\mathbb{P}_\mu(E)\in\{0,1\}$ for $E\in\{{\bf O}^\varphi,{\bf I}^\varphi,{\bf L}^\varphi\}$.
\end{Fact}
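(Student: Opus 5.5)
The plan is a Kolmogorov $0$--$1$ argument run on the restriction of $\mathbb{P}_\mu=\mu^{(\omega)}$ to parameter-free formulas. All the events $[O_k^\varphi]$, $[I_k^\varphi]$, $[L_k^\varphi]$ are defined by $L$-formulas without parameters. So it suffices to work with the pushforward of $\mathbb{P}_\mu$ to $S_{\mathbf x}(\emptyset)$. There the $\sigma$-algebra is generated by the clopen sets $[\psi(x_1,\dots,x_n)]$ with $\psi$ parameter-free.

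\emph{Step 1: independence of disjoint blocks.} Let $\psi(x_1,\dots,x_n)$ and $\chi(x_{n+1},\dots,x_{n+m})$ be parameter-free. Since $\mu^{(n+m)}=\mu^{(m)}\otimes\mu^{(n)}$, the Morley product integrates the fiber function of $\chi$ against $\mu^{(n)}$. Because $\chi$ has no parameters (in particular none from the first block), its fiber function is constant, equal to $\mu^{(m)}(\chi)$. Hence
\[
\mathbb{P}_\mu([\psi]\cap[\chi])=\mu^{(n)}(\psi)\cdot\mu^{(m)}(\chi).
\]
The same computation gives shift invariance: the law of $(x_{n+1},x_{n+2},\dots)$ under $\mathbb{P}_\mu$ restricted to parameter-free formulas is again $\mathbb{P}_\mu$. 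By a $\pi$--$\lambda$ argument, $\sigma(x_1,\dots,x_n)$ and $\sigma(x_{n+1},x_{n+2},\dots)$ are independent. Kolmogorov's $0$--$1$ law then applies to the tail $\sigma$-algebra $\bigcap_n\sigma(x_{n+1},\dots)$.

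\emph{Step 2: reduction to tail events.} Take $E={\bf I}^\varphi$; the cases ${\bf O}^\varphi$ and ${\bf L}^\varphi$ are identical. Set
\[
E_n=\bigcap_k[I^\varphi_k(x_{n+1},\dots,x_{n+k})].
\]
Shattering is hereditary to subsequences: if $x_1,\dots,x_{n+k}$ is shattered, so is $x_{n+1},\dots,x_{n+k}$. Order and strict chains are also hereditary to subsequences. Hence $E=E_0\subseteq E_1\subseteq E_2\subseteq\cdots$. By shift invariance $\mathbb{P}_\mu(E_n)=\mathbb{P}_\mu(E)$ for every $n$, so each $E_n$ equals $E$ up to a null set. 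Therefore $E$ agrees a.e. with $E_\infty:=\bigcup_n E_n=\bigcup_{n\ge m}E_n$. For every $m$, the set $E_\infty$ is $\sigma(x_{m+1},\dots)$-measurable, so $E_\infty$ is a tail event. Thus $\mathbb{P}_\mu(E)=\mathbb{P}_\mu(E_\infty)\in\{0,1\}$.

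The main obstacle is Step 1: making sure the Morley product really factorizes on parameter-free formulas, and that $\mu^{(\omega)}$ is well defined and shift invariant (this is where definability of $\mu$ is needed). One must also keep the whole argument inside the countably generated $\sigma$-algebra over $\emptyset$, so that the classical Kolmogorov law applies verbatim.
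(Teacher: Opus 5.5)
Your argument is correct. There is no step-by-step comparison to make, because the paper does not prove this statement: it imports it from \cite{GH} (Lemma~6.5) and calls $\mathbf{O}^\varphi,\mathbf{I}^\varphi,\mathbf{L}^\varphi$ ``tail events''.

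Your proposal gives a self-contained proof that uses only two properties of $\mathbb{P}_\mu=\mu^{(\omega)}$. The first is independence of the past block $\sigma(x_1,\dots,x_n)$ from the future $\sigma(x_{n+1},x_{n+2},\dots)$. The second is shift invariance. Both come from $\mu^{(n+m)}=\mu^{(m)}\otimes\mu^{(n)}$, which relies on associativity, and that is where definability of $\mu$ enters.

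It also handles a point the paper glosses over. As literally defined, through initial segments $x_1,\dots,x_k$, these events are not tail events, since they depend on $x_1$. Your Step~2 fixes this correctly. Heredity to final segments gives $E_0\subseteq E_1\subseteq\cdots$, and shift invariance gives $\mathbb{P}_\mu(E_n\setminus E_0)=0$. So $E$ agrees up to a null set with the genuine tail event $E_\infty$.

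The citation buys brevity. Your route makes explicit exactly what is used, so it applies to any measure whose $\mu^{(\omega)}$ is well defined and has these two properties.

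One wording caveat: Kolmogorov's law does not apply ``verbatim'' to the coordinates.
\begin{itemize}
\item The $\sigma$-algebras generated by formulas in a single variable $x_i$ are mutually independent, by your rectangle computation.
\item But $\sigma(x_1,\dots,x_n)$, generated by formulas jointly in $x_1,\dots,x_n$, is in general strictly larger than their join. So your tail $\sigma$-algebra contains the classical one rather than coinciding with it.
\end{itemize}
What you need, and what Step~1 actually proves, is the block independence. The standard proof then goes through: a tail event $A$ is independent of each $\sigma(x_1,\dots,x_n)$, hence of the generating algebra $\bigcup_n\sigma(x_1,\dots,x_n)$, hence of a $\sigma$-algebra containing $A$. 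Therefore $\mathbb{P}_\mu(A)=\mathbb{P}_\mu(A)^2$.

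A minor point: the rectangle factorization holds for formulas with arbitrary parameters, not only parameter-free ones. Passing to $S_{\mathbf{x}}(\emptyset)$ is a convenience that keeps you in a metrizable space, not a necessity.
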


We are now ready to state the main theorem of this section, which is essentially an adaptation of the ideas underlying Theorem~\ref{irgs-dependent}.
\begin{Theorem} \label{irgs-tame-event}
Let $\mu$ be an $irgs$  Keisler measure. Then  
\[
\mathbb{P}_\mu({\bf O}^\varphi)
=
\mathbb{P}_\mu({\bf I}^\varphi)
=
\mathbb{P}_\mu({\bf L}^\varphi)
=0.
\]

Consequently, \(
\mathbb{P}_\mu({\bf O})
=
\mathbb{P}_\mu({\bf I})
=
\mathbb{P}_\mu({\bf L})
=0.
\)

\end{Theorem}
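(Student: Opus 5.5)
We argue by contradiction, separately for each of the three events, and in each case we rerun the argument of Theorem~\ref{irgs-dependent}(i). By the 0--1 law (Fact~\ref{0-1}), if one of $\mathbf{O}^\varphi,\mathbf{I}^\varphi,\mathbf{L}^\varphi$ does not have $\mathbb{P}_\mu$-measure zero, it has measure $1$. Since these events are decreasing intersections of the clopen sets $[O_k^\varphi]$, $[I_k^\varphi]$, $[L_k^\varphi]$, we then get $\mu^{(k)}(O_k^\varphi)=1$ (respectively $\mu^{(k)}(I_k^\varphi)=1$, $\mu^{(k)}(L_k^\varphi)=1$) for every $k$. Here we use that $\mathbb{P}_\mu$ restricted to the first $k$ coordinates is $\mu^{(k)}$.

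Next we transfer this to the randomization. By Fact~\ref{r_mu-fact}(ii), $(r_\mu)^{(k)}=r_{\mu^{(k)}}$. Hence $(r_\mu)^{(k)}(\mathbb{P}\llbracket\Theta_k(\bar x)\rrbracket)=1$, where $\Theta_k$ is $O_k^\varphi$, $I_k^\varphi$ or $L_k^\varphi$. Let $(f_i)_{i<\omega}$ be a Morley sequence in $r_\mu$ over $\mathcal{U}^\Omega$. Then $\mathbb{P}\llbracket\Theta_k(f_1,\dots,f_k)\rrbracket=1$ for every $k$.

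For $\mathbf{I}^\varphi$ this is exactly the situation of Theorem~\ref{irgs-dependent}(i). The fullness axioms and perfect witnesses give, for each $I\subseteq k$, an element $g_I$ with $\mathbb{P}\llbracket\varphi(f_i;g_I)\rrbracket=1$ for $i\in I$ and $=0$ for $i\notin I$. So the Morley sequence is shattered by $\mathbb{P}\llbracket\varphi(x;y)\rrbracket$ with values $0,1$, which contradicts convergence.

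For $\mathbf{O}^\varphi$ the same fullness argument applies to the existential formula expressing an order of length $k$. We obtain $g_1,\dots,g_k$ in $\mathcal{C}$ with $\mathbb{P}\llbracket\varphi(f_i;g_j)\rrbracket=1$ for $i\le j$ and $=0$ for $i>j$ (or the reverse convention). This gives the order pattern for $\mathbb{P}\llbracket\varphi(x;y)\rrbracket$ with $r=0,s=1$ along a finite initial segment of the Morley sequence. By indiscernibility and compactness we get such a pattern of length $\omega$ along some Morley sequence. This directly contradicts the definition of generic stability of $r_\mu$, which holds because $\mu$ is $irgs$.

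For $\mathbf{L}^\varphi$ we would reduce to the order case. In the classical theory, $L_k^\varphi(x_1,\dots,x_k)$ implies the existence of witnesses $b_1,\dots,b_{k-1}$ with $b_j\in\varphi(x_{j+1};\mathcal{U})\setminus\varphi(x_j;\mathcal{U})$. By the chain of inclusions these give $\varphi(x_i;b_j)$ true iff $i>j$. So $L_k^\varphi\vdash O_{k-1}^{\varphi'}$ for $\varphi'(x;y):=\varphi(x;y)$ with the order read as above. Hence $\mathbb{P}_\mu(\mathbf{L}^\varphi)\le\mathbb{P}_\mu(\mathbf{O}^{\varphi})$, which is $0$ by the previous case. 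If a direction convention makes this awkward, we redo the fullness argument with $L_k^\varphi$ directly, as for $\mathbf{O}^\varphi$.

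The global statement then follows from countable subadditivity, since $L$ is countable.

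The main obstacle is the fullness/perfect-witness step. Turning $\mathbb{P}\llbracket\exists\bar y\,\Theta\rrbracket=1$ into exact witnesses in $\mathcal{C}$, and then producing an infinite pattern from finite ones, is where care is needed. We would first try to apply Theorem 2.7 of \cite{BK} formula-by-formula exactly as in Theorem~\ref{irgs-dependent}. Failing that, we would use $\varepsilon$-approximate witnesses. These still violate generic stability, with $r=\varepsilon<s=1-\varepsilon$.
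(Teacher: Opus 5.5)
Your proof is correct, but it takes a different route from the paper's.

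The paper first reduces everything to $\mathbf{O}^\varphi$ via $\mathbf{I}^\varphi,\mathbf{L}^\varphi\subseteq\mathbf{O}^\varphi$ (Fact~6.4 of \cite{GH}). It then mimics the ``Alternative Argument'' for Theorem~\ref{irgs-dependent}(ii):
it takes the uniformly convergent sequence $A=\{f_i:i<\omega\}\subseteq M^\Omega$ supplied by Theorem~\ref{main-irgs};
it uses finite satisfiability of $(r_\mu)^{(k)}=r_{\mu^{(k)}}$ in $A$ to produce approximate order patterns of every finite length inside $A$;
and it contradicts the uniform bound $n_{\varphi,\varepsilon}$.

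You instead follow part~(i) of that theorem. You work with an actual Morley sequence of $r_\mu$ and obtain exact $0/1$ order patterns on its initial segments from fullness and perfect witnesses. You then pass to an infinite pattern by compactness, which contradicts the definition of generic stability of $r_\mu$ directly. Indiscernibility is not actually needed here, since all the finite witnesses sit on initial segments of one fixed sequence.

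What each route buys:
\begin{itemize}
\item Your route dispenses with the stable-basic-sequence characterization and with the finite-satisfiability claim for the Morley powers of $r_\mu$ in $A$.
\item It also proves slightly more, namely an order-property analogue of Theorem~\ref{irgs-dependent}(i): for any definable $\mu$ with $\mathbb{P}_\mu(\mathbf{O}^\varphi)=1$, every Morley sequence in $r_\mu$ carries an exact infinite order pattern.
\item The paper's route stays inside the small set $A\subseteq M^\Omega$ and works with approximate witnesses and a quantitative bound.
\end{itemize}

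Two small remarks. First, your separate treatment of $\mathbf{I}^\varphi$ is redundant once $\mathbf{O}^\varphi$ is handled. Second, the convention worry in the $\mathbf{L}^\varphi$ case is harmless. The pattern ``$\varphi(x_i;b_j)$ iff $i>j$'' that you extract is exactly the one forbidden in the definition of generic stability, so your $\mathbf{O}$-argument applies to it verbatim.
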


\begin{proof} 
Since
\(
\mathbb{P}_\mu(\mathbf{O}^{\varphi})
\geq
\mathbb{P}_\mu(\mathbf{I}^{\varphi})\)
 and 
\(\mathbb{P}_\mu(\mathbf{O}^{\varphi})
\geq
\mathbb{P}_\mu(\mathbf{L}^{\varphi}),
\)
it suffices to show that
\(
\mathbb{P}_\mu(\mathbf{O}^{\varphi})=0
\).
(See also  Fact 6.4 in \cite{GH}.)
If this is not the case, then by Fact~\ref{0-1}  above, we must have \(
\mathbb{P}_\mu(\mathbf{O}^{\varphi}) = 1
\).
Now, similarly to the proof of Theorem~\ref{irgs-dependent}, from the fact that
\(
\mathbb{P}_\mu(\mathbf{O}^{\varphi})=1
\)
and that $r_{\mu^{(k)}}$  (for each $k$) is finitely satisfiable in the set
\(
A=\{f_i:i<\omega\}
\), where $(f_i)_{i<\omega}$ is the  stable basic sequence for $r_\mu$, 
it follows that the order property ($OP$)  has witnesses in $A$ of arbitrarily large finite size for the formula $\mathbb{P}\llbracket \varphi(x;y) \rrbracket$. 
This yields a contradiction, since the sequence $(f_i)_{i<\omega}$ is uniformly convergent and therefore cannot witness the order property of arbitrarily large finite length.

For the ``consequently'' part, note that since the language is countable, a countable union of measurable null events is again a null event.
This completes the proof.
\end{proof}

\begin{Remark}
The corollary 6.9 of \cite{GH}, which is an eventual version of the above theorem, also holds for $irgs$ measures by a similar argument.
\end{Remark}

The notion of a symmetric measure (i.e., a measure such that $\mu_x \otimes \mu_y = \mu_y \otimes \mu_x$) was introduced and studied in Corollary~\ref{cor}  for $irgs$ measures. The following proposition shows the relationship between symmetry and the event  $\bf I$ with the event $\mathbf{O}$.
\begin{Observation} \label{observation} Suppose $\mu$ is a symmetric measure with $\mathbb{P}_\mu(\mathbf{I})=0$. Then $\mathbb{P}_\mu(\mathbf{O})=0$. 
\end{Observation}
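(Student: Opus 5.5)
The plan is to argue by contraposition. Suppose $\mathbb{P}_\mu(\mathbf{O})>0$; since the language is countable, some formula $\varphi(x;y)$ then has $\mathbb{P}_\mu(\mathbf{O}^\varphi)>0$, and the 0--1 law of Fact~\ref{0-1} upgrades this to $\mathbb{P}_\mu(\mathbf{O}^\varphi)=1$. The aim is to show that $\mathbb{P}_\mu(\mathbf{I}^\psi)=1$ for some formula $\psi$, which contradicts $\mathbb{P}_\mu(\mathbf{I})=0$. We may assume throughout that $\mathbb{P}_\mu(\mathbf{I}^\varphi)=0$.

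The key input is the classical dichotomy: if a formula $\varphi(x;y)$ has the order property, then either $\varphi$ has IP or some derived formula has the strict order property (Shelah's theorem OP $\Leftrightarrow$ IP or SOP). We use it on Morley sequences in the following form. Let $\bar a=(a_i)_{i<\omega}$ realize a typical point of $\mathbb{P}_\mu$, so that almost surely every finite initial segment admits orders of arbitrary length, i.e.\ $\bar a\in\mathbf{O}^\varphi$. We then want to show that, for $\mathbb{P}_\mu$-almost every $\bar a$, the realization of order-patterns on arbitrarily long finite subsequences forces one of two outcomes. Either $\varphi$ (or a Boolean combination of its instances) shatters arbitrarily long subsequences, which puts $\bar a$ in $\mathbf{I}^{\psi}$ for some $\psi$; or chains $\varphi(x_{\sigma(1)};y)\subsetneq\cdots$ appear, which gives $\mathbf{L}^{\psi}$ for some $\psi$.

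Symmetry enters at this point. Since $\mu$ is symmetric, $\mathbb{P}_\mu$ is invariant under finite permutations of the coordinates, i.e.\ it is an exchangeable measure on $S_{\mathbf x}(\mathcal U)$. Hence, for each $k$ and each permutation $\sigma$ of $k$, the event $[O_k^\varphi(x_{\sigma(1)},\dots,x_{\sigma(k)})]$ has the same measure as $[O_k^\varphi(\bar x)]$, namely $1$. Intersecting over the finitely many $\sigma\in S_k$, almost surely every reordering of $a_1,\dots,a_k$ is $\varphi$-ordered by suitable parameters $b_1,\dots,b_k$. Realizing all $k!$ orderings simultaneously on the same $k$ points is exactly what yields shattering of a subsequence of length about $\log k$: from the family of all orders one extracts, by a standard Ramsey/combinatorial argument (as in the proof that ``all permutations ordered'' implies IP), arbitrary subsets cut out by a single $b$. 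Thus $\mathbb{P}_\mu([I^\varphi_{m}])=1$ for all $m$, so $\mathbb{P}_\mu(\mathbf{I}^\varphi)=1$. This is the desired contradiction, and it does not even need the SOP branch.

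I expect the main obstacle to be this combinatorial step: extracting shattering from ``every permutation of $\bar a$ is $\varphi$-ordered.'' The plan is to show directly that for any $I\subseteq m$ one can choose a permutation of $2m$ points placing the elements indexed by $I$ below the others, and then take the threshold parameter $b_j$ of that order. Since $\varphi(a_{\sigma(i)};b_j)$ holds iff $i>j$, a single $b_j$ cuts out exactly the initial block. A second point needing care is that the measure of an intersection of measure-one events is still $1$, which is fine because the intersections are finite or countable.
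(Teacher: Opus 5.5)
Your proposal is correct and follows the paper's own route. The paper assumes $\mathbb{P}_\mu(\mathbf{O})=1$ and invokes the standard fact that a symmetric sequence with OP has IP to get $\mathbb{P}_\mu(I_n^\varphi)=1$ for all $n$; your steps (exchangeability of $\mathbb{P}_\mu$, intersecting the measure-one events $[O_k^\varphi(x_{\sigma(1)},\dots,x_{\sigma(k)})]$ over $\sigma\in S_k$, then extracting shattering via a threshold parameter, after using countable subadditivity and the 0--1 law to get a single $\varphi$ with $\mathbb{P}_\mu(\mathbf{O}^\varphi)=1$) are a more explicit version of that same argument. Two small corrections: the detour through Shelah's OP/IP/SOP dichotomy is unnecessary, and with your convention $b_j$ cuts out the final block, not the initial one (the initial block is cut out by $\neg\varphi(x;b_j)$), which is harmless because you range over all subsets $I$ and their complements.
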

\begin{proof}
Assume, for contradiction, that $\mathbb{P}_\mu(\mathbf{O}) = 1$.  Now, since $\mathbb{P}_\mu(O_n^\varphi) = 1$ for every $n$, 
a similar argument to the standard argument in model theory which says that any indiscernible sequence having the order property (OP) and symmetry also has the independence property (IP) yields  that $\mathbb{P}_\mu(I_n^\varphi) = 1$ for every $n$, and hence $\mathbb{P}_\mu(\bigcap_n I_n^\varphi) = 1$, contradicting $\mathbb{P}_\mu(\mathbf{I})=0$.
\end{proof}

In \cite{GH}, three well-known examples of Keisler measures are presented and their properties are analyzed. Here, we only discuss the properties of two of these examples and indicate whether they are dependent or not. The third example (Example~6.11 therein) will be discussed in more detail in the final remark.

In Example~6.10 therein, a Keisler measure is presented in the theory of the Rado graph which is definable and symmetric, and satisfies
\( 
\Bbb P_{\mu}(\mathbf{I}) = \Bbb P_{\mu}(\mathbf{O}) = 1,  \Bbb P_{\mu}(\mathbf{L}) = 0
\).
It can be shown that this measure is not dependent.

In Example~6.13 therein, a Dirac measure is constructed in an NIP theory which is definable but not fim. In this case,
\( 
\Bbb P_{\mu}(\mathbf{O}) = \Bbb P_{\mu}(\mathbf{L}) = 1,  \Bbb P_{\mu}(\mathbf{I}) = 0
\).
This measure is dependent since it is a type, and it is not symmetric because the language contains the relation $<$.

We now describe the third example in more detail.

\begin{Remark}[Example/Remark]
In general, non-dependence implies that \(\mathbb{P}_\mu(\mathbf{I}) = 1\) (see the proof of Theorem~\ref{irgs-dependent}), but dependence does not imply \(\mathbb{P}_\mu(\mathbf{I}) = 0\).

For this purpose, consider the theory of the triangle-free random graph (the Henson graph) in the language \(L=\{R\}\), together with the complete type
\( 
p=\{\neg R(x,a) : a \in \mathcal U\}
\).
As mentioned in Example~6.11 of \cite{GH}, the Dirac measure \(\mu=\delta_p\) is a \(fam\), and since every Morley sequence of it is shattered, we conclude that
\( 
\Bbb P_{\mu}(\mathbf{I}) = \Bbb P_{\mu}(\mathbf{O}) = 1
\).
Also, since the theory is NSOP, we have
\( 
\Bbb P_{\mu}(\mathbf{L}) = 0
\).
Nevertheless, since \(\mu\) is a type, it is dependent (see Fact~\ref{dependent-ex}). Finally, note that Observation~\ref{observation} is not applicable in this setting, even though \(\mu\) is symmetric, since
\( 
\Bbb P_{\mu}(\mathbf{I}) = 1
\).
\end{Remark}

\section*{Acknowledgments}

I would like to thank the School of Mathematics, Institute for Research in Fundamental Sciences (IPM), P. O. Box 19395-5746, Tehran, Iran, for their support. 
This research was in part supported by a grant from IPM (No. 1405030024).

\end{document}